\documentclass[aps,prl,reprint,superscriptaddress,nofootinbib]{revtex4-2}

\usepackage{amsmath, amssymb,amsthm,mathtools}
\usepackage{booktabs}
\usepackage{microtype}
\usepackage[hidelinks]{hyperref}
\usepackage{xcolor}
\usepackage{color}

\usepackage{tikz}
\usetikzlibrary{arrows.meta}

\allowdisplaybreaks

\newcommand{\E}{\mathbb{E}}
\newcommand{\Var}{\operatorname{Var}}
\newcommand{\Cov}{\operatorname{Cov}}
\newcommand{\KL}{D_{\mathrm{KL}}}
\newcommand{\Z}{\mathbb{Z}}

\theoremstyle{plain}
\newtheorem{theorem}{Theorem}
\newtheorem{lemma}{Lemma}
\newtheorem{proposition}{Proposition}

\theoremstyle{definition}
\newtheorem{definition}{Definition}
\newtheorem{remark}{Remark}

\newtheorem{assumption}{Assumption}

\begin{document}

\title{Fluctuation impossibility results for stochastic burst networks}

\author{David F. Anderson}
\affiliation{Department of Mathematics, University of Wisconsin--Madison, Madison, Wisconsin 53706, USA}

\date{\today}

\begin{abstract}
Stochastic reaction networks, arising in biochemical systems and other
nonequilibrium settings, often involve components at low copy number, where
individual production and degradation events generate substantial
fluctuations.  In a 2019 \textit{Physical Review Letters} article, Yan,
Hilfinger, Vinnicombe, and Paulsson conjectured that, for networks with linear
degradation and arbitrary cross-regulatory production rates, feedback cannot
suppress the stationary fluctuations for each component below the fluctuations of its constant-rate
counterpart~\cite{yan2019kinetic}.  With components indexed by $i$, their formulation allows production events
to create a random number $K_i$ of molecules: the unit-birth case has
$K_i\equiv1$, the biologically important burst model takes $K_i$ to be
geometrically distributed, but more general positive integer-valued burst laws
are also permitted.

The conjecture was recently proved in the case of unit births, i.e., $K_i \equiv 1$ \cite{RKH2025,anderson2026noise}.
We show here that the conjecture is \textit{false in general}  by
constructing a two-component network with bounded production rates and burst
sizes in $\{1,2\}$ for which both stationary Fano factors lie below their common
constant-rate baseline.   However, we then prove the conjecture for positive geometric bursts, the canonical burst model in stochastic gene expression.  Specifically, for arbitrary regulatory architecture and arbitrary nonlinear cross-regulatory production rates, we prove an exact weighted tradeoff that rules out simultaneous suppression of every component below its geometric-burst
baseline.  The geometric theorem includes the unit-birth model as a degenerate
case and therefore subsumes the previously proved unit-birth result.
 We also prove a complementary structural impossibility result for  arbitrary positive integer-valued burst laws with finite second moments: if the activating and inhibiting interactions have a
globally consistent sign structure, in the sense that every cycle of the regulatory interaction graph contains an
even number of negative interactions, then every component individually
satisfies $F_{X_i}\ge B_i$, where $F_{X_i}$ is its stationary Fano factor and
$B_i$ is its constant-rate burst baseline.
\end{abstract}

\maketitle

Reaction and interaction networks are used throughout the sciences to model
systems ranging from biochemical regulation and chemical kinetics to
population dynamics, ecological communities, and epidemics \cite{anderson2015stochastic, allen2010introduction}.  When the
relevant copy numbers are low, discrete stochastic models are often the
natural mathematical description.  In this regime, individual production and
degradation events can generate substantial fluctuations, which may propagate
through the network and affect switching, timing, information transmission,
and the reliability of downstream responses
\cite{gillespie1977exact,mcadams1997stochastic,thattai2001intrinsic,
elowitz2002stochastic,paulsson2004summing,raj2008nature,
purvis2013encoding}.  This raises a fundamental question: \textit{even allowing an
arbitrarily complicated interaction network and arbitrary nonlinear
regulatory mechanisms, is it possible to simultaneously suppress the intrinsic fluctuations
of every component?}    Many systems of interest operate far from equilibrium, so any universal
limitation of this kind would be especially striking.

Suppressing fluctuations in a single selected component via regulation is not, by itself,
difficult.  For example, an upward fluctuation in a component $X_1$ may
increase the production of another component $X_2$, which in turn represses
the production of $X_1$ and pushes it back toward its mean.  More generally, a
network may use several comparatively noisy components to stabilize one
component, or even a chosen collection of components.  The substantially more
difficult question is whether a sufficiently sophisticated network can
suppress the fluctuations of \emph{every} component simultaneously.  Suppose
one allows an arbitrary number of components, an arbitrary interaction
topology, and arbitrary nonlinear regulatory functions.  Can such an unrestricted design reduce the fluctuations of every component
below its natural baseline, or must reducing fluctuations in some components
necessarily increase them in others? 

To formulate this question precisely, one first needs a baseline mathematical
model.  For concreteness, we use the language of biochemical reaction networks
and refer to the components as molecular species, although the results apply
more generally to stochastic population models.  Index the species by $i$, and
let $X_i$ denote the count of species $i$.  Each molecule degrades independently
at rate $1/\tau_i$, for some $\tau_i>0$, so degradation events occur at total
rate $X_i/\tau_i$ and decrease the count by one.
Production, in contrast, occurs in bursts.  Let $K_i$ be a positive
integer-valued random variable,
\[
    K_i\in\{1,2,\ldots\},
    \qquad
    p_{i,k}:=\mathbb P(K_i=k).
\]
Production events occur at constant rate $\lambda_i>0$, and each such event
adds an independent copy of $K_i$ to the count.  Thus the model is
\begin{align*}
        &X_i\longrightarrow X_i-1
        \quad\text{at rate }\frac{X_i}{\tau_i},\\
        &X_i\longrightarrow X_i+K_i
        \quad\text{at rate }\lambda_i.
\end{align*}

The choice $K_i\equiv1$ gives the classical unit-birth--death process, whose
stationary distribution is Poisson with mean $\lambda_i\tau_i$.  Standard
two-stage models of gene expression describe transcription and translation
explicitly \cite{thattai2001intrinsic,paulsson2004summing}.  When mRNA is
short-lived relative to protein, eliminating the mRNA yields effectively
instantaneous protein bursts with a geometric size distribution
\cite{shahrezaei2008analytical}.  Bursty production has also been observed and
characterized across bacterial and eukaryotic systems
\cite{golding2005real,sanchez2013genetic,chong2014mechanism,
fukaya2016enhancer}.  More generally, the same mathematical structure appears in queueing theory
as an infinite-server queue with batch arrivals, in which customers arrive in
random batches and depart after independent exponential service times
\cite{shanbhag1966infinite,chaudhry1983first}.  We therefore allow any positive
integer-valued burst law with finite second moment.

Variance is the natural measure of the size of stationary fluctuations, but
variance alone does not provide a fair comparison across systems with
different mean abundances.  Following Ref.~\cite{yan2019kinetic}, we compare
systems at fixed stationary mean; for fixed burst law and degradation
timescale, this is equivalent to fixing the mean production rate.  The corresponding normalized measure is the Fano factor
\[
        F_{X_i}:=
        \frac{\Var_\pi(X_i)}{\E_\pi[X_i]},
\]
where $\pi$ denotes the stationary distribution of the process under
consideration.
For the constant-rate production process above, the stationary moment
identities proved in Lemma~\ref{lem:S-moment-identities} of the Supplemental
Material give
\begin{equation}
        F_{X_i}
        =
        B_i,
        \qquad
        B_i
        :=
        \frac{\E[K_i^2]+\E[K_i]}{2\E[K_i]}.
        \label{eq:baseline-main}
\end{equation}
Thus $B_i$ is the natural fluctuation baseline determined by the burst law.  For unit births, $B_i=1$, since the stationary distribution is Poisson.  
 If
$K_i$ is geometric on the positive integers with mean $\kappa_i$, then
$B_i=\kappa_i$ (see \eqref{eq:positive-geometric-main} and \eqref{eq:geometric_stuff}, below).

Now consider a network
\[
        X=(X_1,\dots,X_N)
\]
of such components.  Biological systems are rarely collections of independent
production--degradation processes.  One molecular species may regulate another
through transcriptional regulation, signaling pathways, sequestration,
enzymatic activity, or other interaction mechanisms
\cite{shen2002network,tyson2003sniffers,purvis2013encoding,
buchler2009protein}.  We therefore allow component $i$ to be produced at a
state-dependent rate $f_i(X_{-i})$, where $X_{-i}$ is the vector obtained from
$X$ by removing its $i$th component.  The model then becomes
\begin{align}
\begin{split}
         X_i & \longrightarrow X_i-1
        \quad\text{at rate }\frac{X_i}{\tau_i},\\
        X_i & \longrightarrow X_i+K_i
        \quad\text{at rate }f_i(X_{-i}).
        \end{split}
        \label{eq:model-reactions-main}
\end{align}
The associated interaction graph has a directed edge $j\to i$ whenever
$f_i$ depends nontrivially on $x_j$.  When that dependence is monotone, we
label the edge positive or negative according to whether $f_i$ is
increasing or decreasing in $x_j$.
The restriction that $f_i$ does not depend directly on $X_i$ means that
feedback to component $i$ must pass through at least one other component.
Apart from this restriction, the interaction graph and the rate functions are
arbitrary: dependencies may be nonlinear and nonmonotone.  A representative
signed interaction graph is shown in Fig.~\ref{fig:interaction-network}.

\begin{figure}[t]
\centering
\begin{tikzpicture}[
    scale=1.35,
    species/.style={
        draw,
        circle,
        thick,
        minimum size=10mm,
        inner sep=1pt
    },
    activation/.style={
        -{Latex[length=2.8mm]},
        thick,
        shorten >=2pt
    },
    inhibition/.style={
        -{Bar[width=3.2mm]},
        thick,
        shorten >=4pt
    }
]

\node[species] (X1) at (0,1.7) {$X_1$};
\node[species] (X2) at (2.4,1.7) {$X_2$};
\node[species] (X3) at (1.2,0) {$X_3$};

\draw[inhibition,red]
    (X1) -- node[midway,above,font=\small] {$-$} (X2);

\draw[activation,blue,bend left=14]
    (X2) to node[midway,right,font=\small] {$+$} (X3);

\draw[activation,blue,bend left=14]
    (X3) to node[midway,left,font=\small] {$+$} (X2);

\draw[inhibition,red]
    (X3) -- node[midway,left,font=\small] {$-$} (X1);

\end{tikzpicture}

\caption{A representative three-component regulatory network.  Arrowheads
denote activating interactions and T-bars denote inhibiting interactions.}
\label{fig:interaction-network}
\end{figure}

The fundamental design question can now be stated precisely: is it possible to
choose the interaction network and the functions $f_i$ so that
\begin{equation}
        F_{X_i}<B_i
        \qquad
        \text{for every }i\in\{1,\dots,N\}?
        \label{eq:simultaneous-suppression-main}
\end{equation}
The point is not whether feedback can suppress one component, or several
chosen components.  Rather, the question is whether an arbitrarily complicated
network can suppress \emph{all} components simultaneously.

In fact, one can ask the stronger impossibility question of whether it is
possible merely to keep every component at or below its baseline while
strictly suppressing at least one:
\begin{align}
\begin{split}
        F_{X_i} &\le B_i  \qquad\text{for every }i,
        \qquad\text{with}\\
        F_{X_j} &<B_j
        \qquad\text{for at least one }j.
        \label{eq:weak-simultaneous-suppression-main}
        \end{split}
\end{align}

Yan, Hilfinger, Vinnicombe, and Paulsson posed this question in a 2019
\textit{Physical Review Letters} article and conjectured that the answer is no
\cite{yan2019kinetic}.  Their conjecture permits arbitrary cross-regulatory
topology, arbitrary component lifetimes, arbitrary nonlinear production
functions, and general burst-size distributions.  In the notation above, it
asserts that every stationary network in this class must contain at least one
component for which 
\[
        F_{X_i}> B_i,
\]
if another satisfies $F_{X_j} < B_j$.
The strength of the conjecture is precisely its independence from the details
of the regulatory architecture.  It proposes an impossibility result: no matter
how many components are used, how they are connected, or how the nonlinear
production rates are chosen, the fluctuations of every component cannot be
simultaneously reduced below their constant-rate levels. 

Yan et al.\ proved the corresponding statement in the high-copy-number
linear-noise approximation and supported the exact finite-copy conjecture with
extensive numerical experiments
\cite{yan2019kinetic}.  More recently, Ripsman, Kell, and Hilfinger developed
an exact stationary information-flow argument for the unit-birth model
\cite{RKH2025}, and the required infinite-state-space justification was
provided in Ref.~\cite{anderson2026noise}.  The conjecture is therefore known
to hold when $K_i\equiv1$ for every component.

The present work asks whether the same impossibility principle survives when
production is genuinely bursty.  We show that the answer depends on the burst
distribution.  First, the unrestricted conjecture is false: we construct an analytic family of two-component networks with bounded
production rates and burst sizes in $\{1,2\}$
 for which
\[
        F_{X_1}<B_1,
        \qquad
        F_{X_2}<B_2.
\]
Thus even a simple nongeometric burst law can permit simultaneous suppression.

However, our main positive result,
Theorem~\ref{thm:geometric-main}, proves the conjecture for positive geometric
bursts, the central burst model in stochastic gene expression.  If component
$i$ has geometric burst size with mean $\kappa_i$ and lifetime $\tau_i$, then
$B_i=\kappa_i$ and
\begin{equation}
        \sum_{i=1}^N
        \frac{F_{X_i}-\kappa_i}{\tau_i\kappa_i}
        \ge0.
        \label{eq:geometric-tradeoff-preview}
\end{equation}
Consequently, if any component lies below its geometric-burst baseline, then
at least one other component must lie above its baseline.
Uniform suppression below the geometric-burst baselines is therefore
impossible.  Since $K_i\equiv1$ is the degenerate positive-geometric case
$\kappa_i=1$, the theorem also subsumes the previously proved unit-birth
result.

A complementary impossibility mechanism comes from the interaction structure.
Theorem~\ref{thm:signed-main} shows that, for arbitrary positive burst laws,
if every regulatory dependence has a fixed sign and every cycle contains an
even number of negative interactions, then
\[
        F_{X_i}\ge B_i
        \qquad\text{for every }i.
\]
Thus this regime is as far from simultaneous suppression as possible: no
component can lie below its own baseline.  The network in
Figure~\ref{fig:interaction-network} illustrates this condition: arrowheads and
T-bars represent positive and negative interactions, respectively, and every
cycle contains an even number of negative interactions, making the signs
globally consistent.

We now formulate the model and the results precisely.

\emph{Model and assumptions.---}
 Let
$X=(X_1,\ldots,X_N)$ be a continuous-time Markov chain on
$\mathbb Z_{\geq 0}^N$.  Its generator, acting on bounded test functions
$\varphi$, is
\begin{align}
(L\varphi)(x)
&=
\sum_{i=1}^N f_i(x_{-i})
\sum_{k\geq1}p_{i,k}
\bigl[\varphi(x+ke_i)-\varphi(x)\bigr]
\notag\\
&\quad+
\sum_{i=1}^N\frac{x_i}{\tau_i}
\bigl[\varphi(x-e_i)-\varphi(x)\bigr],
\label{eq:generator-main}
\end{align}
where the death term is omitted when $x_i=0$.

Equivalently, writing
$P_t(x)=\mathbb P(X(t)=x)$, the law of the process satisfies the chemical
master equation
\begin{align}
\frac{d}{dt}P_t(x)
&=
\sum_{i=1}^N f_i(x_{-i})
\left[
\sum_{k=1}^{x_i}p_{i,k}P_t(x-ke_i)-P_t(x)
\right]
\notag\\
&\quad+
\sum_{i=1}^N
\left[
\frac{x_i+1}{\tau_i}P_t(x+e_i)
-\frac{x_i}{\tau_i}P_t(x)
\right].
\label{eq:master-equation-main}
\end{align}
We use the generator formulation throughout the proofs.  Sample paths may be
generated using Gillespie's stochastic simulation algorithm, with an
independent copy of $K_i$ sampled at each production event
\cite{gillespie1977exact,anderson2015stochastic}.

We work under the standing assumptions stated precisely in the Supplemental
Material (see Assumption~\ref{ass:S-standing}). In particular, the chain is irreducible and admits a stationary
distribution $\pi$, the relevant second moments are finite, the stationary
mean production rates
\[
        \mu_i:=\E_\pi[f_i(X_{-i})]
\]
are finite and positive, and the total jump rate grows at most linearly.  The
Supplemental Material also gives checkable Foster--Lyapunov conditions under
which these assumptions hold (see Theorem~\ref{thm:S-sufficient-conditions}). 

Under these assumptions, Lemma~\ref{lem:S-moment-identities} of the
Supplemental Material gives
\begin{equation}
        \frac{\E_\pi[X_i]}{\tau_i}
        =
        \kappa_i\mu_i,
        \qquad
        F_{X_i}-B_i
        =
        \frac{\Cov_\pi(f_i(X_{-i}),X_i)}{\mu_i},
\label{eq:fano-main}
\end{equation}
where $\kappa_i=\E[K_i]$ and $B_i$ is the constant-rate baseline defined in
\eqref{eq:baseline-main}.  Thus component $i$ lies below its constant-rate
baseline precisely when its production rate is negatively correlated with its
copy number.

\emph{A  two-component counterexample.---}
Consider $N=2$, with $\tau_1=\tau_2=1$, and, for each of $i \in \{1,2\},$ let the two components have
the common burst distribution
\begin{equation}
        \mathbb P(K_i=1)=1-r,
        \qquad
        \mathbb P(K_i=2)=r,
\label{eq:counter-burst-main}
\end{equation}
where $r\in(1/2,1]$.  Then for each $i$,
\[
        \kappa_i= 1+r, \quad \E[K_i^2] = 1 + 3r, \quad B_i = B(r) := \frac{1+2r}{1+r}.
\]

For $\varepsilon>0$, define the bounded production rates
\begin{equation}
f_1^{(\varepsilon)}(n)
=
\begin{cases}
\varepsilon, & n=0,\\
\frac65 \varepsilon, & n=1,\\
\frac95 \varepsilon, & n\ge2,
\end{cases}
\qquad
f_2^{(\varepsilon)}(n)
=
\begin{cases}
\varepsilon, & n=0,\\
\frac45 \varepsilon, & n=1,\\
\frac15 \varepsilon, & n\ge2.
\end{cases}
\label{eq:analytic-counter-rates-main}
\end{equation}
Thus $f_1^{(\varepsilon)}$ is increasing, while
$f_2^{(\varepsilon)}$ is decreasing.

This model is fully analyzed in 
Section~\ref{sec:S-counterexamples} of the Supplemental Material.  In particular, Proposition~\ref{prop:S-analytic-counterexample} 
gives, for each $i \in\{1,2\}$,
\begin{equation}
        F_{X_i} =  B(r)  - \frac{2r(2r-1)}{75}\varepsilon^2 +  O(\varepsilon^3), \quad \text{as} \quad  \varepsilon\downarrow0.
\label{eq:analytic-counter-fano-main}
\end{equation}
Since $r>1/2$, the coefficient of $\varepsilon^2$ is strictly negative.
Consequently, for every fixed $r\in(1/2,1]$, there exists
$\varepsilon_0(r)>0$ such that
\[
        F_{X_1}<B(r),
        \qquad
        F_{X_2}<B(r),
\]
when $0<\varepsilon<\varepsilon_0(r)$.
Hence, the unrestricted conjecture of \cite{yan2019kinetic} is false.

\emph{Geometric bursts.---}
We now specialize to positive geometric bursts, for $k \ge 1,$
\begin{equation}
        \mathbb P(K_i=k)
        =
        p_i(1-p_i)^{k-1},
        \qquad
        p_i\in(0,1].
\label{eq:positive-geometric-main}
\end{equation}
Their means and second moments satisfy
\begin{align}
\label{eq:geometric_stuff}
        \kappa_i=\frac1{p_i},
        \qquad
        \E[K_i^2]=\frac{2-p_i}{p_i^2},
\end{align}
so the general baseline in \eqref{eq:baseline-main} and  \eqref{eq:fano-main} reduces to
\[
        B_i=\kappa_i.
\]
Our main positive result is the following exact impossibility theorem.  

\begin{theorem}[Geometric-burst tradeoff]
\label{thm:geometric-main}
Suppose Assumption~\ref{ass:S-standing} holds and every $K_i$ has the positive
geometric law \eqref{eq:positive-geometric-main}.  Then
\begin{equation}
        \sum_{i=1}^N
        \frac{F_{X_i}-\kappa_i}{\tau_i\kappa_i}
        \ge0.
\label{eq:geom-tradeoff-main}
\end{equation}
Consequently, if one component lies below its geometric-burst baseline, then
at least one other component must lie strictly above its own.
\end{theorem}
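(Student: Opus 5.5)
The plan is to adapt the stationary information-flow argument used for unit births in Refs.~\cite{RKH2025,anderson2026noise}. The one new ingredient is the structure of the constant-rate geometric-burst law. By \eqref{eq:fano-main} with $B_i=\kappa_i$, the claim \eqref{eq:geom-tradeoff-main} is equivalent to
\[
\sum_{i=1}^N \frac{\Cov_\pi(f_i(X_{-i}),X_i)}{\tau_i\kappa_i\mu_i}\ \ge\ 0 .
\]
So it suffices to produce an inequality whose right-hand side is exactly this weighted covariance sum. The route to that inequality is an entropy-type comparison of $\pi$ with the product reference measure
\[
\nu=\bigotimes_{i=1}^N \nu_i,
\]
where $\nu_i$ is the stationary law of the constant-rate model with rate $\mu_i$, lifetime $\tau_i$ and geometric bursts of mean $\kappa_i$. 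For positive geometric bursts, $\nu_i$ is negative binomial with shape $\mu_i\tau_i$ and success probability $p_i$. Its birth ratios
\[
\frac{\nu_i(x+ke_i)}{\nu_i(x)}
\]
are explicit products of affine functions of $x_i$. Because the geometric law is memoryless, the burst kernel composed with these ratios telescopes to something affine in $x_i$.

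The steps are as follows.

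\textbf{Step 1.} Write down the stationarity identity $\E_\pi[L\varphi]=0$ for suitable unbounded test functions $\varphi$. This is justified by the linear growth of jump rates and the finite second moments in Assumption~\ref{ass:S-standing}, via truncation and dominated convergence, as in Ref.~\cite{anderson2026noise}.

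\textbf{Step 2.} Use the standard nonnegativity of stationary entropy flux. For $h=\nu/\pi$ (strictly positive by irreducibility), the bound $\log z\le z-1$ applied pointwise over jumps gives
\[
\E_\pi\!\left[\frac{Lh}{h}\right]\ \ge\ \E_\pi[L\log h]=0 .
\]
Equivalently,
\[
\sum_x\sum_y \pi(x)\,q(x,y)\left(\frac{\nu(y)\pi(x)}{\nu(x)\pi(y)}-1\right)\ \ge\ 0,
\]
where $q(x,y)$ are the transition rates. This step is stated for a finite truncation first and then passed to the limit.

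\textbf{Step 3.} Evaluate the resulting expression by reorganizing the sum over incoming jumps. The degradation contributions reduce to $\E_\pi[X_i]/\tau_i$-type terms. The burst contributions are of the form
\[
\sum_k p_{i,k}\,\frac{\nu_i(x_i)}{\nu_i(x_i-k)},
\]
which the negative-binomial telescoping turns into an affine function of $x_i$ weighted by $f_i(x_{-i})$. Combining these with $\E_\pi[X_i]=\tau_i\kappa_i\mu_i$, all constant and mean terms should cancel. What should remain is $\sum_i \Cov_\pi(f_i,X_i)/(\tau_i\kappa_i\mu_i)$, with the correct sign.

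\textbf{Step 4 (consequence).} Once \eqref{eq:geom-tradeoff-main} holds, the final sentence of the theorem is immediate. All weights $1/(\tau_i\kappa_i)$ are positive, so a strictly negative term forces a strictly positive one. Setting $p_i=1$ recovers the unit-birth case.

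I expect the main obstacle to be Step 3: the choice of reference measure and weights has to make the burst terms collapse exactly, rather than merely bounding them. If the product-negative-binomial ratio does not telescope cleanly, the first alternative is to apply the same $\log z\le z-1$ argument species by species. In that version one works with a conditional ratio $\pi(x)/\pi(x-e_i)$ and exploits that a geometric burst of size $k$ arriving at $x$ decomposes as a single step followed by a geometric continuation with probability $1-p_i$. This renewal structure is exactly what fails for the $\{1,2\}$ burst law of the counterexample.

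A secondary technical obstacle is making Step 2 rigorous on the infinite state space, where $\log(\nu/\pi)$ need not be $\pi$-integrable a priori. I would handle this by working on truncations and using the monotone behaviour of the nonnegative integrand $z-1-\log z$.
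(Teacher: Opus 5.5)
Your reference-measure strategy works, but Step~2 is oriented the wrong way, and as written Step~3 cannot be carried out. With $h=\nu/\pi$, the inequality you display reads
\[
\sum_{x,y}\pi(x)\,q(x,y)\,\frac{\pi(x)}{\pi(y)}\,\frac{\nu(y)}{\nu(x)}\ \ge\ \E_\pi[\Lambda(X)].
\]
The unknown edge ratios $\pi(x)/\pi(y)$ do not cancel, so no regrouping of jumps turns the left side into stationary moments. The burst term you then write, $\sum_k p_{i,k}\nu_i(x_i)/\nu_i(x_i-k)$, matches this orientation, with the ratio inverted relative to what is needed. It is also not affine in $x_i$. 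For example, when $\mu_i\tau_i=q_i$ (so $\nu_i(n)=p_iq_i^n$) it equals $p_iq_i(1-q_i^{2x_i})/(1-q_i^2)$. In addition, the constant-rate stationary law has shape $\mu_i\tau_i/q_i$, not $\mu_i\tau_i$, and it is Poisson$(\mu_i\tau_i)$ at $p_i=1$.

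The fix is to apply $\log z\le z-1$ to $z=\pi(y)\nu(x)/(\pi(x)\nu(y))$, i.e.\ to start from $0=\E_\pi[L\log(\pi/\nu)]$. Then $\pi(x)$ cancels and you get an inequality linear in $\pi$:
\[
0\le\sum_{y}\pi(y)\sum_{x}q(x,y)\frac{\nu(x)}{\nu(y)}-\E_\pi[\Lambda(X)].
\]

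\textbf{Bursts.} Incoming bursts into $y$ contribute $f_i(y_{-i})\sum_{k=1}^{y_i}p_{i,k}\nu_i(y_i-k)/\nu_i(y_i)$. Using $p_{i,k}=p_iT_{i,k}$ and the constant-rate case of \eqref{eq:S-cut} for $\nu_i$, this equals $f_i(y_{-i})\,p_iy_i/(\tau_i\mu_i)$.

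\textbf{Deaths.} Global balance for $\nu_i$ then makes the incoming death term equal to $\mu_i+q_iy_i/\tau_i$.

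\textbf{Conclusion.} Subtracting $\E_\pi[\Lambda]=\sum_i(\mu_i+\E_\pi[X_i]/\tau_i)$ leaves exactly $\sum_i \Cov_\pi(f_i(X_{-i}),X_i)/(\tau_i\kappa_i\mu_i)\ge0$, with no further estimate.

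\textbf{Integrability.} No truncation is needed. $\E_\pi[L\log\pi]=0$ is Lemma~\ref{lem:S-log-integrability}. The edge increments of $\log\nu$ are $O(k)$ for bursts and $O(1)$ for deaths when $q_i>0$, and logarithmic in the Poisson case. So Lemma~\ref{lem:stationary_identity_lemma} applies to $\log\nu$ under (A1)--(A3).

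With this correction, your route is a genuinely different and shorter proof than the paper's. The paper splits $\E_\pi[L\log\pi]=0$ into per-species information flows and discards the death KL term. It then applies a log-sum inequality over the conditional laws $\pi^i_n$ and the bound $x\log(y/x)\le xy/\mu_i+\mu_i-2x$. Finally it collapses a bilinear sum using cut balance for the marginal $\pi_i$; your fallback is essentially this argument. Your version needs one inequality plus cut balance for the explicit reference $\nu_i$, and uses neither conditional laws nor the cancellation lemma. The paper's version, in return, yields the componentwise bound $(\dot I)_{X_i}\le(F_{X_i}-\kappa_i)/(\tau_i\kappa_i)$, which carries an information-flow interpretation.
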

The two-component implication is illustrated in
Fig.~\ref{fig:geometric-impossibility}.

The endpoint $p_i=1$ gives $K_i\equiv1$, so
Theorem~\ref{thm:geometric-main} contains the unit-birth theorem of
Refs.~\cite{RKH2025,anderson2026noise}.  Many gene-expression models instead
use a geometric law on $\{0,1,2,\ldots\}$ with mean $b_i$.  Zero-size events can
be removed by thinning; conditional on a nonzero event, the burst is positive
geometric with mean $1+b_i$.  In that convention, the corresponding baseline is
$1+b_i$.

\begin{figure}[t]
\centering
\begin{tikzpicture}[scale=2.0]

\fill[red!15] (0,0) rectangle (1,1);

\draw[->, thick] (0,0) -- (2.7,0)
node[right] {$F_{X_1}/\kappa_1$};
\draw[->, thick] (0,0) -- (0,2.7)
node[above] {$F_{X_2}/\kappa_2$};

\draw[thick, red] (1,0) -- (1,1) -- (0,1);
\draw[dashed, gray] (1,1) -- (1,2.5);
\draw[dashed, gray] (1,1) -- (2.5,1);

\fill[black] (1,1) circle (1pt);
\node[above right, font=\scriptsize] at (1,1) {$(1,1)$};
\node[below, font=\scriptsize] at (1,0) {$1$};
\node[left, font=\scriptsize] at (0,1) {$1$};
\node[align=center, font=\scriptsize] at (0.55,0.5)
{Impossibility\\region};

\end{tikzpicture}
\caption{ 
Universal impossibility region for two-component networks with positive
geometric bursts.  The horizontal and vertical coordinates are
$x=F_{X_1}/\kappa_1$ and $y=F_{X_2}/\kappa_2$, respectively.  The shaded square, apart from the point $(1,1)$, is forbidden by
Theorem~\ref{thm:geometric-main}.  For fixed
$\tau_1,\tau_2$, the theorem excludes the larger half-plane
$(x-1)/\tau_1+(y-1)/\tau_2<0$, whose boundary passes through $(1,1)$ with
slope $-\tau_2/\tau_1$.
}
\label{fig:geometric-impossibility}
\end{figure}

\emph{Proof mechanism.---}
See Section~\ref{sec:S-geometric-proof} of the Supplemental Material for the
complete proof.
The proof extends the stationary information-flow argument for unit births
developed in Refs.~\cite{RKH2025,anderson2026noise}.  The information flows
sum to zero; the death contribution is nonpositive; and a log-sum inequality bounds the birth contribution by an expression
involving the conditional production rates.  The new issue is that one burst may cross several
copy-number thresholds.  The marginal cut-balance identity
\eqref{eq:S-cut} in the Supplemental Material equates the total upward burst
flux crossing a threshold with the downward degradation flux.  For a positive
geometric law,
\begin{equation}
        \mathbb P(K_i=k)
        =
        p_i\,\mathbb P(K_i\ge k).
        \label{eq:geometric-tail-match-main}
\end{equation}
Combining \eqref{eq:geometric-tail-match-main} with the cut-balance identity
\eqref{eq:S-cut} collapses the sum over all thresholds crossed by a burst to a
term involving $\E_\pi[X_i f_i(X_{-i})]$, and hence to the covariance in
\eqref{eq:fano-main}. Summing the resulting componentwise inequalities and using information-flow
cancellation yields \eqref{eq:geom-tradeoff-main}.
The proportionality in \eqref{eq:geometric-tail-match-main} characterizes the
geometric law among positive integer-valued burst distributions. For a nongeometric burst distribution, the same calculation leaves an
additional signed term with no universal sign.  This obstruction is genuine:
as the example above shows, the conclusion can fail.

\emph{A structural impossibility theorem for general bursts.---}
A second impossibility mechanism comes from the regulatory architecture.
Extending the signed-monotonicity result of
Ref.~\cite{anderson2026noise} from unit births to arbitrary positive burst
laws, we show that a globally coherent sign structure forces every component
to remain at or above its own baseline.

Fix a sign vector  
\[
        \sigma=(\sigma_1,\dots,\sigma_N)\in\{-1,1\}^N
\]
and define the associated signed order by
\begin{equation}
        x\preceq_\sigma y
        \quad\Longleftrightarrow\quad
        \sigma_jx_j\le\sigma_jy_j
        \qquad
        \text{for every }j.
        \label{eq:signed-order-main}
\end{equation}
We say that the network is \emph{signed monotone} if there exists such a sign vector for which
\begin{equation}
        x\preceq_\sigma y
        \quad\Longrightarrow\quad
        \sigma_i f_i(x_{-i})
        \le
        \sigma_i f_i(y_{-i})
        \qquad
        \text{for every }i.
        \label{eq:signed-monotonicity-main}
\end{equation}
Thus, after possibly reversing the order on selected coordinates, every
production rate varies in the same order-preserving direction.

The same signed-order framework was developed for unit births in
Ref.~\cite{anderson2026noise}. When each dependence of $f_i$ on a coordinate
$x_j$ is either globally nondecreasing or globally nonincreasing, the
graph-theoretic criterion is especially simple: after edge directions are
ignored, every cycle must contain an even number of negative interactions.
Such signed interaction graphs are called \textit{structurally balanced}.

\begin{theorem}[Termwise bound under signed monotonicity]
\label{thm:signed-main}
Suppose Assumption~\ref{ass:S-standing} holds, each $K_i$ is positive
integer-valued with finite second moment, and the network satisfies
\eqref{eq:signed-monotonicity-main}.  Then, for every
$i\in\{1,\dots,N\}$,
\begin{equation}
        \Cov_\pi\bigl(f_i(X_{-i}),X_i\bigr)\ge0,
        \quad \text{and so} \quad
        F_{X_i}\ge B_i.
        \label{eq:signed-termwise-main}
\end{equation}
\end{theorem}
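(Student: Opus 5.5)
The plan is to show that the stationary law $\pi$ is associated with respect to the signed order $\preceq_\sigma$. Once that is known, the covariance bound follows immediately: $F_{X_i}\ge B_i$ is equivalent to $\Cov_\pi(f_i(X_{-i}),X_i)\ge 0$ by \eqref{eq:fano-main}.

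First, change coordinates. Put $Y_j:=\sigma_jX_j$. In these coordinates $\preceq_\sigma$ becomes the usual componentwise order, and \eqref{eq:signed-monotonicity-main} says that each $\sigma_i f_i$ is nondecreasing. The functions $g(x):=\sigma_i f_i(x_{-i})$ and $h(x):=\sigma_i x_i$ are therefore both $\preceq_\sigma$-nondecreasing. Also
\[
\Cov_\pi(g,h)=\sigma_i^2\Cov_\pi(f_i,X_i)=\Cov_\pi(f_i,X_i).
\]
So it suffices to prove that $\Cov_\pi(g,h)\ge0$ for all $\preceq_\sigma$-increasing $g,h$ with the integrability that Assumption~\ref{ass:S-standing} provides.

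Second, establish association through a Harris/FKG-type criterion for Markov jump processes, namely Liggett's theorem. Suppose the process is monotone (attractive) in $\preceq_\sigma$ and every jump moves the state to a $\preceq_\sigma$-comparable point. Then $\pi$ is associated, provided the chain can be started from a deterministic state, which is trivially associated, and $P_t$ converges to $\pi$.

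Comparability of jumps is clear: each jump changes only one coordinate, either $x\mapsto x+ke_i$ or $x\mapsto x-e_i$.

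Monotonicity I would prove by an explicit coupling of two copies started at $x\preceq_\sigma y$. Take a coordinate with $\sigma_i=+1$. Drive births by a shared Poisson clock of rate $\max(f_i(x_{-i}),f_i(y_{-i}))$ with thinning, and let both copies use the same burst size $K_i$ whenever both fire. Since $f_i(x_{-i})\le f_i(y_{-i})$, the $y$-copy fires whenever the $x$-copy does. Couple deaths so that the copy with more molecules loses one whenever the other does, at rate $\min(x_i,y_i)/\tau_i$ jointly, plus extra deaths for the excess. For $\sigma_i=-1$ the roles are reversed. The order can only be threatened when $x_i=y_i$, and there the coupling preserves it. This is where general burst laws enter, and it is harmless because identical bursts are used. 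The same argument should work equally well by checking the generator criterion on increasing sets.

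Third, the main obstacle is the infinite state space with unbounded rates and unbounded test functions. I would pass to the limit along the following chain:
\begin{itemize}
\item Apply association to bounded truncations $g\wedge M$ and $(h\wedge M)\vee(-M)$, which are still increasing.
\item Use irreducibility and positive recurrence, both part of Assumption~\ref{ass:S-standing}, to get $P_t(x_0,\cdot)\to\pi$ in total variation.
\item For the existence and nonexplosion of the coupled process, rely on the linear growth of jump rates in the same assumption.
\item Let $M\to\infty$ using the finite second moments and the finiteness of $\E_\pi[X_if_i]$ from the assumption, via dominated convergence.
\end{itemize}
This gives $\Cov_\pi(f_i,X_i)\ge0$, and \eqref{eq:fano-main} then yields $F_{X_i}\ge B_i$.
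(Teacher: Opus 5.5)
Your overall route is the paper's own. The split coupling with thinned clocks and shared burst sizes gives monotonicity of the semigroup in $\preceq_\sigma$, and it coincides with Proposition~\ref{prop:S-burst-semigroup-monotonicity}. One-coordinate jumps are comparable, so a Harris-type interpolation gives association of the transition laws. That association then passes to $\pi$, then to the truncated observables $\sigma_i f_i(X_{-i})$ and $\sigma_i X_i$, and \eqref{eq:S-fano} finishes.

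The step you leave under-justified is the Harris criterion itself. You apply Liggett's theorem directly on $\mathbb Z_{\ge0}^N$. The standard versions, however, are Harris's for finite state spaces and Liggett's for Feller processes on compact spaces. Here the rates are unbounded, so the interpolation $s\mapsto P_s\bigl(P_{t-s}u\cdot P_{t-s}v\bigr)(x)$ cannot be differentiated without further work. Your list of remedies covers nonexplosion, convergence to $\pi$ and unbounded test functions, but not this step.

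The paper closes the gap in two moves:
\begin{itemize}
\item It proves association for the finite chain $L^{(M)}$ on $\{0,\dots,M\}^N$, in which any burst that would overshoot $M$ is capped at $M$.
\item It transfers association to the full chain by running both chains with the same event times and burst sizes. They agree until the full chain leaves $S_M$, and that exit time diverges as $M\to\infty$ by nonexplosion.
\end{itemize}
This is exactly where bursts matter. Capping preserves the order because $(z_i+k)\wedge M\le(w_i+k)\wedge M$ whenever $z_i\le w_i$. Suppressing overshooting bursts instead (harmless for unit births) would break monotonicity. For example, with $M=5$, $z_i=2<w_i=4$ and a common burst $k=3$, only the lagging copy moves, to $5>4$. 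You should add such an approximation, or cite a version of the criterion valid for nonexplosive countable-state chains.

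A smaller slip: when $\sigma_i=-1$ and $f_i$ is unbounded, $g\wedge M=-f_i$ is not bounded, so the association inequality does not apply to it. Truncate on both sides, $(g\wedge M)\vee(-M)$. Dominated convergence then goes through with the integrable bound $f_i(X_{-i})X_i\le C_\Lambda s(X)(1+s(X))$ from (A1) and (A3).
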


Unlike Theorem~\ref{thm:geometric-main}, this result places no restriction on
the burst law and gives a separate bound for every component. 
 In particular, whenever each regulatory interaction has a fixed sign and every cycle contains an even
number of negative interactions, every component lies at or above its own
constant-rate baseline.

The proof, given in
Section~\ref{sec:S-signed-monotonicity} of the Supplemental Material, follows
the signed-order argument of Ref.~\cite{anderson2026noise}.
Ordered copies of the burst process are coupled
using the same burst-size realization for production events occurring in both
copies; signed monotonicity ensures that events occurring in only one copy
cannot reverse the order.  The stationary law is then positively associated
with respect to this order, meaning that observables increasing in the signed
order have nonnegative covariance.  Applying this property to
$\sigma_i f_i(X_{-i})$ and $\sigma_iX_i$ gives
\[
        \Cov_\pi\bigl(f_i(X_{-i}),X_i\bigr)\ge0.
\]
Equation~\eqref{eq:fano-main} then gives
\eqref{eq:signed-termwise-main}.  

The counterexample above lies outside this structural class.  In that example,
$f_1$ is increasing whereas $f_2$ is decreasing, so the two-component
feedback cycle contains exactly one negative interaction and therefore
violates the even-negative-cycle condition.  Thus the example lies outside
both classes covered by our positive theorems: its burst law is nongeometric
and its interaction graph is not structurally balanced.

\emph{Discussion.---}
The question considered here is a fundamental one for stochastic interaction
networks: allowing arbitrary numbers of components, arbitrary interaction
topologies, arbitrary burst laws, and arbitrary nonlinear cross-regulatory
rates, can one reduce intrinsic fluctuations without paying for that
reduction elsewhere?  Our results show that the answer depends on the burst law.  We gave an
explicit example showing that the most general version of the conjecture is
\textit{false}.  However, two broad impossibility mechanisms remain.  Most
importantly, geometric bursts enforce the global weighted tradeoff
\eqref{eq:geom-tradeoff-main} for arbitrary topology, establishing the
conjecture in arguably its most important special case.  Further, even for
arbitrary positive burst laws, a globally consistent sign structure on the
interaction graph enforces the stronger termwise bounds
$F_{X_i}\ge B_i$.  Neither result assumes detailed balance, proximity to
equilibrium, or a large-copy-number approximation such as the linear noise
approximation.

Many fundamental questions remain.  First, one would like to identify
nongeometric burst laws for which some universal tradeoff survives, and to
quantify how \eqref{eq:geom-tradeoff-main} changes when the burst law is
perturbed away from geometric.  Second, the instantaneous geometric-burst
model should be compared with a model in which a short-lived intermediate,
such as mRNA, explicitly produces the output molecules before degrading
\cite{thattai2001intrinsic, shahrezaei2008analytical}.  The geometric-burst model arises in the limit as the intermediate lifetime
tends to zero, but the corresponding fluctuation tradeoff for a finite
intermediate lifetime is open.
Finally, it is
important to determine whether any analogous impossibility principle remains
when cross-regulation acts through degradation as well as production.

\subsection{Acknowledgments}

The author used OpenAI's ChatGPT  as an interactive
assistant for bounded tasks involving exposition, algebraic manipulation,
symbolic evaluation of the finite systems arising in the counterexample, and
preparation of figure code.  The author formulated each task, examined the
resulting output, and independently verified every calculation and argument
before incorporating any material into the manuscript.  The mathematical
ideas, analysis, and conclusions, including any typos or errors, are entirely
the author's own.

\bibliographystyle{apsrev4-2}
\bibliography{PRL_bib}

\clearpage
\onecolumngrid
\appendix
\section*{Supplemental Material: Proofs and Details}
\addcontentsline{toc}{section}{Supplemental Material: Proofs and Details}

\setcounter{secnumdepth}{2}

\setcounter{section}{0}
\renewcommand{\thesection}{S\arabic{section}}
\renewcommand{\thesubsection}{\thesection.\arabic{subsection}}

\setcounter{equation}{0}
\renewcommand{\theequation}{S\arabic{equation}}
\setcounter{theorem}{0}
\renewcommand{\thetheorem}{S\arabic{theorem}}
\setcounter{lemma}{0}
\renewcommand{\thelemma}{S\arabic{lemma}}
\setcounter{proposition}{0}
\renewcommand{\theproposition}{S\arabic{proposition}}
\setcounter{corollary}{0}
\renewcommand{\thecorollary}{S\arabic{corollary}}

\setcounter{assumption}{0}
\renewcommand{\theassumption}{S\arabic{assumption}}

\setcounter{definition}{0}
\renewcommand{\thedefinition}{S\arabic{definition}}

\setcounter{remark}{0}
\renewcommand{\theremark}{S\arabic{remark}}

The Supplemental Material is organized into three sections.
Section~\ref{sec:S-geometric-proof} proves the geometric-burst tradeoff,
Theorem~\ref{thm:geometric-main}.  We first state the standing analytical
assumptions and give checkable Foster--Lyapunov conditions under which they
hold.  We then establish the stationary moment, cut-balance, and
information-flow identities for arbitrary positive integer-valued burst
distributions with finite second moments.  In particular, information-flow
cancellation does not require the burst law to be geometric.  The geometric
assumption enters only in the componentwise information-flow estimate, where
the geometric tail identity allows the birth contribution to be evaluated by
a cut balance.  Section~\ref{sec:S-counterexamples} analyzes the family of
nongeometric counterexamples introduced in
\eqref{eq:counter-burst-main}--\eqref{eq:analytic-counter-rates-main}, and
Section~\ref{sec:S-signed-monotonicity} proves the termwise bound under signed
monotonicity, Theorem~\ref{thm:signed-main}.

\section{Proof of the geometric-burst tradeoff}
\label{sec:S-geometric-proof}

\subsection{Standing assumptions and sufficient conditions}

Let the state space of the continuous-time Markov chain be denoted $S:=\mathbb Z_{\ge0}^N$ and let $s(x):=\|x\|_1=\sum_{i=1}^N x_i$.  Also, let
\[
        \Lambda(x)  :=  \sum_{i=1}^N f_i(x_{-i}) +     \sum_{i=1}^N \frac{x_i}{\tau_i}
\]
denote the total exit rate from $x$.  Recall that
\[
        \kappa_i:=\E [K_i].
\]

We work under the following standing hypotheses.  This assumption is analogous to Assumption 2.1 in \cite{anderson2026noise}.

\begin{assumption}[Standing hypotheses]
\label{ass:S-standing}
The following conditions hold.
\begin{enumerate}
\item[\textnormal{(A0)}]
The continuous-time Markov chain $X$ is irreducible on $S$ and admits a stationary distribution
$\pi$.

\item[\textnormal{(A1)}]
The stationary distribution has finite second moments:
\[
        \E_\pi[s(X)^2]<\infty.
\]

\item[\textnormal{(A2)}]
For each $i\in\{1,\dots,N\}$, the stationary mean production rate satisfies
\[
        \mu_i
        :=
        \E_\pi[f_i(X_{-i})]
        \in(0,\infty).
\]

\item[\textnormal{(A3)}]
The total exit rate has at-most-linear growth: there exists
$C_\Lambda<\infty$ such that
\[
        \Lambda(x)
        \le
        C_\Lambda(1+s(x)),
        \qquad \forall \ x\in S.
\]
\end{enumerate}
\end{assumption}

Note that \textnormal{(A0)} implies that $\pi$ has full support.  In particular, all stationary
conditional distributions used below are well defined.  Nonexplosion is
not imposed separately; it follows from
Assumption~\ref{ass:S-standing}\textnormal{(A1)} and
\textnormal{(A3)}. See \cite[Lemma 3.1]{anderson2026noise} for a proof of this fact.

While the conditions of Assumption \ref{ass:S-standing} are the analytical assumptions needed for the theory to go through, it is often more convenient to have simple, checkable conditions on the basic model itself.  The following theorem provides such conditions.  Essentially, it says that so long as the total mean production rate grows at most linearly, with linear growth coefficient strictly smaller than the weakest degradation rate, then the necessary moment conditions hold.   Importantly, an obvious corollary is that if the $f_i$ are uniformly bounded above and uniformly bounded away from zero, then Assumption~\ref{ass:S-standing} holds.  Finally, we note that the theorem below applies to general burst distributions, not only geometric ones. 

\begin{theorem}[Sufficient conditions for Assumption~\ref{ass:S-standing}]
\label{thm:S-sufficient-conditions}
Suppose that each $K_i$ has support on the positive integers and satisfies
\[
        \kappa_i=\E[K_i]<\infty,  \qquad \E[K_i^2]<\infty.
\]
Let $\tau_{\max}:=\max_{1\le i\le N}\tau_i$ and assume that there exist constants
\[
        \varepsilon>0,
        \qquad
        A<\infty,
        \qquad
        0\le B<\frac{1}{\tau_{\max}},
\]
such that
\begin{equation}
        f_i(v)\ge\varepsilon
        \qquad
        \text{for every \(i\) and every \(v\in\mathbb Z_{\ge0}^{N-1}\),}
        \label{eq:S-positive-rates}
\end{equation}
and
\begin{equation}
        \sum_{i=1}^N
        \kappa_i f_i(x_{-i})
        \le
        A+B s(x),
        \qquad \forall x\in S.
        \label{eq:S-FL-condition}
\end{equation}
Then the chain is irreducible, nonexplosive, and positive recurrent.  Moreover, its
unique stationary distribution \(\pi\) satisfies
Assumption~\ref{ass:S-standing}.
\end{theorem}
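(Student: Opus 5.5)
We will prove Theorem~\ref{thm:S-sufficient-conditions} with a Foster--Lyapunov argument built on polynomial test functions in the weighted total count. Irreducibility comes first and is direct. By \eqref{eq:S-positive-rates}, every production channel fires at rate at least $\varepsilon>0$ in every state. Since $K_i\ge1$ takes some value $k$ with $p_{i,k}>0$, one can reach any $y$ from $0$: first overshoot each coordinate by bursts, then come down by deaths, which occur at positive rate whenever $x_i>0$. From any $x$ one reaches $0$ by deaths alone, so the chain is irreducible on $S$.

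For the drift, use the linear function $V_1(x)=\sum_i x_i$. Applying \eqref{eq:generator-main} gives
\[
LV_1(x)=\sum_i\kappa_i f_i(x_{-i})-\sum_i\frac{x_i}{\tau_i}\le A+Bs(x)-\frac{s(x)}{\tau_{\max}},
\]
by \eqref{eq:S-FL-condition}. With $\delta:=1/\tau_{\max}-B>0$ this reads $LV_1\le A-\delta V_1$.

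For second moments, take $V_2=V_1^2$. A burst of size $k$ in coordinate $i$ changes $V_2$ by $2ks+k^2$, and a death changes it by $-2s+1$. Hence
\[
LV_2(x)=\sum_i f_i\bigl(2\kappa_i s+\E[K_i^2]\bigr)+\sum_i\frac{x_i}{\tau_i}(1-2s)\le 2s(A+Bs)-\frac{2s^2}{\tau_{\max}}+\sum_i\E[K_i^2]f_i+\frac{s}{\min_i\tau_i}.
\]
Since $\kappa_i\ge1$, we have $f_i\le \kappa_i f_i\le A+Bs$, so the $\E[K_i^2]$ term is $O(1+s)$. This yields $LV_2\le C-\delta V_2+C's\le C''-\tfrac{\delta}{2}V_2$.

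Next apply the standard machinery (e.g.\ Meyn--Tweedie). The function $V_1$ is a norm-like Lyapunov function with $LV_1\le A$, and the jump rates are finite in each state, so the chain is nonexplosive. The drift $LV_1\le A-\delta V_1$ together with irreducibility gives positive recurrence, hence a unique stationary $\pi$ with full support, which is (A0). The bound $LV_2\le C''-\tfrac{\delta}{2}V_2$ gives $\E_\pi[V_2]\le 2C''/\delta$, which is (A1).

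The main obstacle is justifying the moment bound, since $V_2$ is unbounded and $\E_\pi[LV_2]=0$ cannot be used naively. The plan is to localize. Apply Dynkin's formula to $V_2$ stopped at the exit time from $\{s\le n\}$, together with Gronwall and nonexplosion, to get $\E_x[V_2(X_t)]\le V_2(x)e^{-\delta t/2}+2C''/\delta$. Then average in time via ergodicity and let $t\to\infty$ using Fatou's lemma. Alternatively, one can invoke the comparison theorem of Meyn--Tweedie, which gives $\pi(V_2)<\infty$ directly from the drift inequality with $f=\tfrac{\delta}{2}V_2$.

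The remaining assumptions follow quickly. For (A2), \eqref{eq:S-positive-rates} gives $\mu_i\ge\varepsilon>0$, and $f_i\le A+Bs$ together with $\E_\pi[s]<\infty$ gives $\mu_i<\infty$. For (A3), $\Lambda(x)\le N(A+Bs)+s/\min_i\tau_i\le C_\Lambda(1+s)$, again because $\kappa_i\ge1$.
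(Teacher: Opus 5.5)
Your proposal is correct and follows the paper's route: a Foster--Lyapunov drift computation with a quadratic function of the total count $s(x)$, with (A2)--(A3) obtained from $\kappa_i\ge1$ and the linear-growth condition exactly as you do. The paper uses the single function $(1+s)^2$ and defers nonexplosion, positive recurrence and the bound $\E_\pi[s(X)^2]<\infty$ to the cited unit-birth reference, whereas your split into $V_1=s$ and $V_2=s^2$ and your localization argument write out those steps explicitly.
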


\begin{proof}
The proof is essentially the same as that of
\cite[Theorem 2.3]{anderson2026noise}, with one modification to the
Foster--Lyapunov calculation.  In the unit-birth model, a production event
increases $s(x)=\|x\|_1$ by one, whereas here a production event in
coordinate $i$ increases $s(x)$ by $K_i$.  Thus, for
$V(x)=(1+s(x))^2$,
\[
\E\!\left[V(x+K_i e_i)-V(x)\right]
=
2(1+s(x))\kappa_i+\E[K_i^2].
\]
Consequently, with
\[
C_K:=\max_i\frac{\E[K_i^2]}{\kappa_i}<\infty,
\]
the drift satisfies
\[
\begin{aligned}
(LV)(x)
&\le
\left(2(1+s(x))+C_K\right)
\sum_{i=1}^N\kappa_i f_i(x_{-i})
-
(2s(x)+1)\sum_{i=1}^N\frac{x_i}{\tau_i} \\
&\le
\left(2(1+s(x))+C_K\right)(A+Bs(x))
-
\frac{(2s(x)+1)s(x)}{\tau_{\max}}.
\end{aligned}
\]
The leading quadratic coefficient is
\[
2B-\frac{2}{\tau_{\max}}<0.
\]
The remainder of the Foster--Lyapunov argument from
\cite[Theorem 2.3]{anderson2026noise} therefore applies without further
change and yields  nonexplosion, positive recurrence, and
finite stationary second moments.  The verification of irreducibility is straightforward and the proof of
Assumption~\ref{ass:S-standing}\textnormal{(A2)}--\textnormal{(A3)} is
also identical, using $\kappa_i\ge1$ and
\eqref{eq:S-FL-condition}.
\end{proof}

\subsection{Stationary generator and logarithmic integrability}

We  recall two general results from
\cite[Lemmas 3.2--3.4]{anderson2026noise}.  Their proofs do not use the
unit-birth structure and therefore apply without change to the present
burst model.

\begin{lemma}[Stationary generator identity]
Let $X$ be an arbitrary continuous-time Markov chain on a countable state
space $S$, with transition rates $q(x,y)$ and infinitesimal generator $L$, and suppose
that $X$ admits a full-support stationary distribution $\pi$ satisfying
$\E_\pi[\Lambda(X)]<\infty$.
\label{lem:stationary_identity_lemma}
Let $\varphi:S\to\mathbb R$ satisfy
\begin{equation}
        \sum_{x\in S}\pi(x)
        \sum_{y\ne x}q(x,y)
        |\varphi(y)-\varphi(x)|
        <\infty.
        \label{eq:S-domain}
\end{equation}
Then, for every $x\in S$, the series defining
\[
        (L\varphi)(x)
        =
        \sum_{y\ne x}q(x,y)
        [\varphi(y)-\varphi(x)]
\]
is absolutely convergent.  Moreover, $L\varphi\in L^1(\pi)$ and
\[
        \E_\pi[(L\varphi)(X)]=0.
\]
\end{lemma}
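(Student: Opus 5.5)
The statement to prove is the stationary generator identity (Lemma~\ref{lem:stationary_identity_lemma}). The plan is to check absolute convergence pointwise, then integrability, and then derive $\E_\pi[L\varphi]=0$ from the global balance equations, using Fubini--Tonelli to justify exchanging the order of summation.

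\emph{Pointwise convergence and integrability.} Since $\pi$ has full support, $\pi(x)>0$ for every $x$. Hypothesis \eqref{eq:S-domain} then gives
\[
\sum_{y\ne x}q(x,y)|\varphi(y)-\varphi(x)|\le \pi(x)^{-1}\cdot(\text{finite sum})<\infty,
\]
so the series defining $(L\varphi)(x)$ converges absolutely. Applying the triangle inequality inside the sum and summing against $\pi$ bounds $\E_\pi|L\varphi|$ by the left side of \eqref{eq:S-domain}. Hence $L\varphi\in L^1(\pi)$.

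\emph{Splitting the generator.} Next I would write $\E_\pi[L\varphi]=\sum_x\pi(x)\sum_{y\ne x}q(x,y)(\varphi(y)-\varphi(x))$. The difficulty is that the two pieces $\sum\pi(x)q(x,y)\varphi(y)$ and $\sum\pi(x)q(x,y)\varphi(x)=\E_\pi[\Lambda\varphi]$ need not be separately finite, because $\varphi$ itself need not be integrable against $\pi\Lambda$. This is the main obstacle. I would handle it by truncation: set $\varphi_M:=(\varphi\wedge M)\vee(-M)$. Truncation is a $1$-Lipschitz map, so $|\varphi_M(y)-\varphi_M(x)|\le|\varphi(y)-\varphi(x)|$, and condition \eqref{eq:S-domain} holds for $\varphi_M$ with the same dominating function.

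\emph{The bounded case.} For $\varphi_M$ bounded, both pieces are finite, since each is at most $M\,\E_\pi[\Lambda]<\infty$. Fubini then applies to the double sum $\sum_x\sum_y\pi(x)q(x,y)\varphi_M(y)$, giving $\sum_y\varphi_M(y)\sum_{x\ne y}\pi(x)q(x,y)$. The global balance equation for a stationary distribution, $\sum_{x\ne y}\pi(x)q(x,y)=\pi(y)\Lambda(y)$, converts this into $\E_\pi[\Lambda\varphi_M]$. That equals the second piece, so $\E_\pi[L\varphi_M]=0$.

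The global balance equation needs a brief justification. Stationarity here means $\pi Q=0$ entrywise, which holds because $\pi$ is stationary for the minimal chain. If the paper's notion of stationarity is instead $\pi P_t=\pi$, then $\E_\pi\Lambda<\infty$ gives nonexplosion-type regularity, and the standard argument yields $\pi Q=0$; I would cite this as standard.

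\emph{Passing to the limit.} Finally, let $M\to\infty$. Pointwise, $L\varphi_M(x)\to L\varphi(x)$ by dominated convergence in $y$, with dominating function $q(x,y)|\varphi(y)-\varphi(x)|$. Moreover, $|L\varphi_M(x)|\le\sum_y q(x,y)|\varphi(y)-\varphi(x)|$, which is $\pi$-integrable. Dominated convergence in $x$ therefore gives $\E_\pi[L\varphi]=\lim_{M\to\infty}\E_\pi[L\varphi_M]=0$.
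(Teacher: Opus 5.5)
Your proof is correct. The paper gives no argument of its own for this lemma; it only states that the result is exactly Lemma~3.2 of Ref.~\cite{anderson2026noise}. Your proposal therefore supplies a self-contained proof rather than competing with one in the text.

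The truncation step is the right choice for how the lemma is used later. In Lemma~\ref{lem:S-moment-identities} it is applied to $\varphi(x)=x_i^2$ under only second moments. There $\E_\pi[\Lambda(X)X_i^2]$ and the corresponding inflow sum can both be infinite, so the naive split into inflow and outflow terms is unavailable. Your clamp $\varphi_M$, combined with dominated convergence (dominating function $\sum_{y\ne x}q(x,y)|\varphi(y)-\varphi(x)|$), needs nothing beyond \eqref{eq:S-domain}.

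The one step worth tightening is global balance. Rather than appeal to ``nonexplosion-type regularity,'' argue directly. From $\pi P_t=\pi$ you get $\pi(y)[1-P_t(y,y)]/t=\sum_{x\ne y}\pi(x)P_t(x,y)/t$. Fatou's lemma as $t\downarrow0$ then gives $\pi(y)\Lambda(y)\ge\sum_{x\ne y}\pi(x)q(x,y)$ for every $y$. Summing over $y$ and using Tonelli, both sides equal $\E_\pi[\Lambda(X)]<\infty$, which forces equality at every $y$. This also makes explicit that $\E_\pi[\Lambda(X)]<\infty$ is used twice: once to obtain $\pi Q=0$, and once to justify splitting the sum in the bounded case.
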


\begin{proof}
This is exactly \cite[Lemma 3.2]{anderson2026noise}.
\end{proof}

\begin{lemma}[Logarithmic integrability]
\label{lem:S-log-integrability}
Let $\pi$ be any full-support probability distribution on
$S=\mathbb Z_{\ge0}^N$ satisfying $\E_\pi[s(X)^2]<\infty$.
If, in addition, $\pi$ is stationary for a continuous-time Markov chain
whose total exit rate satisfies Assumption~\ref{ass:S-standing}\textnormal{(A3)},
then
\begin{equation}
        \sum_{x\in S}
        \pi(x)(1+s(x))|\log\pi(x)|
        <\infty,
        \label{eq:S-weighted-entropy}
\end{equation}
and
\begin{equation}
        \sum_{x\in S}\pi(x)
        \sum_{y\ne x}q(x,y)
        \bigl(
        |\log\pi(x)|+|\log\pi(y)|
        \bigr)
        <\infty.
        \label{eq:S-edge-entropy}
\end{equation}
In particular, for $g(x):=-\log\pi(x)$, we have
$Lg\in L^1(\pi)$ and
\[
        \E_\pi[(Lg)(X)]=0.
\]
\end{lemma}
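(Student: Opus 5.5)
The plan is to prove the weighted-entropy bound \eqref{eq:S-weighted-entropy} first, and then obtain \eqref{eq:S-edge-entropy} from it by combining (A3) with stationarity. The final claim will then follow directly from Lemma~\ref{lem:stationary_identity_lemma} with $\varphi=g=-\log\pi$. The reason is that $|g(y)-g(x)|\le|\log\pi(x)|+|\log\pi(y)|$, so \eqref{eq:S-edge-entropy} is precisely the domain condition \eqref{eq:S-domain}. The other hypothesis of that lemma, $\E_\pi[\Lambda]<\infty$, comes from (A3) together with $\E_\pi[s(X)]<\infty$.

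\textbf{Step 1: the weighted-entropy bound.} I would split $S$ according to whether $\pi(x)\ge e^{-(1+s(x))}$. On the set where this holds, $|\log\pi(x)|\le 1+s(x)$, so the contribution is at most $\E_\pi[(1+s)^2]<\infty$ by (A1). On the complementary set, I would use the elementary bound $\pi|\log\pi|\le C\sqrt{\pi}$, valid for $\pi\le 1$. This gives a contribution bounded by
\[
C\sum_x(1+s(x))e^{-(1+s(x))/2}.
\]
Since the number of states with $s(x)=n$ is polynomial in $n$, this sum is finite. This step uses only the second moment and full support, and does not need stationarity.

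\textbf{Step 2: the edge terms.} I would handle the two terms in \eqref{eq:S-edge-entropy} separately. The $|\log\pi(x)|$ term equals $\sum_x\pi(x)\Lambda(x)|\log\pi(x)|$. By (A3) this is at most $C_\Lambda\sum_x\pi(x)(1+s(x))|\log\pi(x)|$, which is finite by Step 1.

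The $|\log\pi(y)|$ term is the main obstacle, because $\pi(x)q(x,y)$ need not be comparable to $\pi(y)$ term by term. The plan is to reorganize the sum by target state and apply global balance:
\[
\sum_{x\ne y}\pi(x)q(x,y)=\pi(y)\Lambda(y).
\]
Fubini's theorem applies because all terms are nonnegative. This gives
\[
\sum_x\pi(x)\sum_{y\ne x}q(x,y)|\log\pi(y)|=\sum_y\pi(y)\Lambda(y)|\log\pi(y)|,
\]
which is finite by the same bound as in the first part of this step. Global balance is legitimate for a stationary distribution of a nonexplosive chain. Nonexplosion was already noted to follow from (A1) and (A3), via \cite[Lemma 3.1]{anderson2026noise}, and stationarity means $\pi Q=0$ entrywise.

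\textbf{Step 3: conclusion.} With \eqref{eq:S-edge-entropy} established, the domain condition \eqref{eq:S-domain} holds for $\varphi=g$ by the triangle inequality noted above. Lemma~\ref{lem:stationary_identity_lemma} then gives $Lg\in L^1(\pi)$ and $\E_\pi[(Lg)(X)]=0$.
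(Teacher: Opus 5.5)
Your proof is correct. The paper gives no argument of its own at this point: it cites Lemmas~3.3--3.4 of \cite{anderson2026noise} and asserts that those proofs use only $\E_\pi[s(X)^2]<\infty$ and (A3), not the unit-birth form of the transitions. Your proposal is a self-contained replacement, and it makes that assertion checkable for bursts.

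\begin{itemize}
\item Step~1 uses only full support, the second moment, and the polynomial growth of the level sets $\{s(x)=n\}$.
\item The first edge term uses only $\Lambda(x)=\sum_{y\ne x}q(x,y)$.
\item The target term $\pi(x)q(x,y)|\log\pi(y)|$ is the one place where jump structure might seem to matter, since a burst makes $s(y)-s(x)$ unbounded. You handle it by re-indexing over $y$ and using global balance, $\sum_{x\ne y}\pi(x)q(x,y)=\pi(y)\Lambda(y)$, which holds for any such chain.
\end{itemize}

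One small simplification: you do not need nonexplosion to justify global balance. It follows from Lemma~\ref{lem:stationary_identity_lemma} applied to the bounded function $\varphi=\mathbf 1_{\{y\}}$. The domain condition for this $\varphi$ is bounded by $2\E_\pi[\Lambda(X)]<\infty$.
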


\begin{proof}
This follows directly from
\cite[Lemmas 3.3 and 3.4]{anderson2026noise}.  Those proofs require only
$\E_\pi[s(X)^2]<\infty$ and the at-most-linear growth of $\Lambda$, and
not the unit-birth form of the transitions.
\end{proof}

\subsection{Moment and cut-balance identities}

Throughout this subsection, the burst distributions are arbitrary positive
integer-valued distributions with finite second moments; they need not be
geometric.  We assume throughout that Assumption~\ref{ass:S-standing} holds.

Fix $i\in\{1,\dots,N\}$.  When separating the $i$th coordinate from the
others, we write $x=(n,v)$, where $n=x_i$ and $v=x_{-i}$, and, following the notation of \cite{anderson2026noise}, define
\begin{align*}
        \pi_i(n)
        &:=\sum_{v\in \Z^{N-1}_{\ge 0}} \pi(n,v)
        =\mathbb P_\pi(X_i=n),\\
        \pi_n^i(v)
        &:=\frac{\pi(n,v)}{\pi_i(n)}
        =\mathbb P_\pi(X_{-i}=v\mid X_i=n),\\
        m_i(n)
        &:= \sum_{v\in \Z^{N-1}_{\ge 0}} \pi_n^i(v)f_i(v)
        =\E_\pi[f_i(X_{-i})\mid X_i=n].
\end{align*}
These conditional distributions are well defined because $\pi$ has full
support.

The following result is the burst analogue of \cite[Theorem 2.2, part (1)]{anderson2026noise}.  The proof is essentially the same, but with the first two moments of $K_i$ playing a role.

\begin{lemma}[Stationary moment identities]
\label{lem:S-moment-identities}
For every $i\in\{1,\dots,N\}$,
\begin{equation}
        \frac{\E_\pi[X_i]}{\tau_i}
        =
        \kappa_i\mu_i
        =
        \kappa_i\E_\pi[f_i(X_{-i})].
        \label{eq:S-mean}
\end{equation}
Moreover, defining
\begin{equation}
        B_i
        :=
        \frac{\E[K_i^2]+\E[K_i]}{2\E[K_i]}
        =
        \frac{\E[K_i^2]+\kappa_i}{2\kappa_i},
        \label{eq:S-baseline}
\end{equation}
we have
\begin{equation}
        F_{X_i}-B_i
        =
        \frac{\Cov_\pi(f_i(X_{-i}),X_i)}{\mu_i}.
        \label{eq:S-fano}
\end{equation}
\end{lemma}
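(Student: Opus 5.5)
The plan is to apply the stationary generator identity (Lemma~\ref{lem:stationary_identity_lemma}) to the test functions $\varphi(x)=x_i$ and $\varphi(x)=x_i^2$. Assumption~\ref{ass:S-standing} guarantees that $\E_\pi[\Lambda(X)]<\infty$: by (A3) we have $\Lambda\le C_\Lambda(1+s)$, and (A1) gives finite moments.

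For $\varphi(x)=x_i$, the integrability condition \eqref{eq:S-domain} reduces to finiteness of
\[
\sum_x\pi(x)\Bigl[f_i(x_{-i})\kappa_i+x_i/\tau_i\Bigr],
\]
which holds by (A2) and (A1). The generator gives
\[
(L\varphi)(x)=\kappa_if_i(x_{-i})-x_i/\tau_i .
\]
Taking the stationary expectation yields \eqref{eq:S-mean}.

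For $\varphi(x)=x_i^2$, the jump increments are $(n+k)^2-n^2=2nk+k^2$ for a birth of size $k$ and $1-2n$ for a death. Condition \eqref{eq:S-domain} then requires finiteness of
\[
\E_\pi\bigl[f_i(X_{-i})(2X_i\kappa_i+\E[K_i^2])\bigr]+\E_\pi\bigl[X_i(2X_i+1)\bigr]/\tau_i .
\]
The second term is finite by (A1). For the first, I would use $f_i\le\Lambda\le C_\Lambda(1+s)$ together with $X_i\le s$, so that $f_iX_i\le C_\Lambda(1+s)s$, which is integrable by (A1). Finite $\E[K_i^2]$ handles the constant term. This integrability check is the only delicate step, and it is dispatched by the linear-growth assumption (A3).

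The generator identity then gives
\[
0=2\kappa_i\E_\pi[X_if_i]+\E[K_i^2]\mu_i-\tfrac{2}{\tau_i}\E_\pi[X_i^2]+\tfrac{1}{\tau_i}\E_\pi[X_i].
\]
Write $m:=\E_\pi[X_i]=\tau_i\kappa_i\mu_i$ and $\E_\pi[X_if_i]=\Cov_\pi(f_i,X_i)+m\mu_i$. Substituting, and using $\mu_i=m/(\tau_i\kappa_i)$, gives
\[
\tfrac{2}{\tau_i}\E_\pi[X_i^2]=2\kappa_i\Cov_\pi+\tfrac{2m^2}{\tau_i}+\tfrac{m\E[K_i^2]}{\tau_i\kappa_i}+\tfrac{m}{\tau_i}.
\]
It follows that
\[
\Var_\pi(X_i)=\tau_i\kappa_i\Cov_\pi+m\,\frac{\E[K_i^2]+\kappa_i}{2\kappa_i}.
\]
Dividing by $m$ and using $\tau_i\kappa_i/m=1/\mu_i$ yields
\[
F_{X_i}=B_i+\frac{\Cov_\pi(f_i(X_{-i}),X_i)}{\mu_i},
\]
which is \eqref{eq:S-fano}.
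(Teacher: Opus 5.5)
Your proposal is correct and follows essentially the same route as the paper: apply the stationary generator identity to $x_i$ and $x_i^2$, verify the integrability condition using (A1)--(A3) with $f_i X_i\le C_\Lambda(1+s)s$, and then rearrange with the mean identity. Your algebra correctly arrives at the paper's intermediate identity $\Var_\pi(X_i)=B_i\E_\pi[X_i]+\kappa_i\tau_i\Cov_\pi(f_i(X_{-i}),X_i)$ before dividing by $\E_\pi[X_i]=\kappa_i\tau_i\mu_i>0$.
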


\begin{proof}
First take $\varphi(x)=x_i$.  To verify the integrability condition
\eqref{eq:S-domain} in Lemma~\ref{lem:stationary_identity_lemma}, note that
\[
\sum_{y\ne x}q(x,y)|\varphi(y)-\varphi(x)|
=
\kappa_i f_i(x_{-i})+\frac{x_i}{\tau_i}.
\]
Its stationary expectation is finite by
Assumption~\ref{ass:S-standing}\textnormal{(A1)}--\textnormal{(A2)}.
Therefore Lemma~\ref{lem:stationary_identity_lemma} applies, and
\[
0
=
\E_\pi[(L\varphi)(X)]
=
\kappa_i\E_\pi[f_i(X_{-i})]
-
\frac{\E_\pi[X_i]}{\tau_i},
\]
which proves \eqref{eq:S-mean}.

Next take $\varphi(x)=x_i^2$.  Only jumps in coordinate $i$ contribute.  A burst of size $k$ changes $\varphi$ by
\[
        (x_i+k)^2-x_i^2=2kx_i+k^2,
\]
while a death changes $\varphi$ by
\[
        (x_i-1)^2-x_i^2=-2x_i+1.
\]
Therefore,
\begin{align*}
(L\varphi)(x)
&=
f_i(x_{-i})
\sum_{k\ge1}p_{i,k}(2kx_i+k^2)
+
\frac{x_i}{\tau_i}(-2x_i+1) \\
&=
f_i(x_{-i})
\left(
2\kappa_i x_i+\E[K_i^2]
\right)
+
\frac{x_i}{\tau_i}(-2x_i+1).
\end{align*}
Moreover,
\begin{align*}
\sum_{y\ne x}q(x,y)|\varphi(y)-\varphi(x)|
&=
f_i(x_{-i})\sum_{k\ge1}p_{i,k}(2kx_i+k^2)
+
\frac{x_i}{\tau_i}|-2x_i+1| \\
&\le
2\kappa_i x_i f_i(x_{-i})
+\E[K_i^2]f_i(x_{-i})
+\frac{2x_i^2+x_i}{\tau_i}.
\end{align*}
Since $f_i(x_{-i})\le\Lambda(x)\le C_\Lambda(1+s(x))$ and
$x_i\le s(x)$, the stationary expectation of the right-hand side is finite by
Assumption~\ref{ass:S-standing}\textnormal{(A1)}--\textnormal{(A3)}.
Hence Lemma~\ref{lem:stationary_identity_lemma} applies and gives
\begin{align}
0
&=
\E_\pi[(L\varphi)(X)] \notag\\
&=
2\kappa_i\E_\pi[f_i(X_{-i})X_i]
+\E[K_i^2]\mu_i
-\frac{2}{\tau_i}\E_\pi[X_i^2]
+\frac{1}{\tau_i}\E_\pi[X_i].
\label{eq:S-second-moment-balance}
\end{align}
Using \eqref{eq:S-mean} and rearranging,
\begin{equation}
        \Var_\pi(X_i)
        =
        B_i\E_\pi[X_i]
        +
        \kappa_i\tau_i
        \Cov_\pi(f_i(X_{-i}),X_i).
\end{equation}
Finally, divide by $\E_\pi[X_i]=\kappa_i\tau_i\mu_i$
to obtain \eqref{eq:S-fano}.
\end{proof}

We next establish the burst analogue of the adjacent-level current and
marginal flux balance used in
\cite[Eq.~(2.9) and Lemma 3.6]{anderson2026noise}.  Define
\begin{equation*}
        T_{i,r}:=\mathbb P(K_i\ge r),
        \qquad r\ge1.
\end{equation*}
For $j\ge1$ and $v\in\mathbb Z_{\ge0}^{N-1}$, define the stationary
fiber current across the cut separating the levels $X_i<j$ and
$X_i\ge j$ by
\begin{equation}
\mathcal J_i(j;v)
:=
\sum_{n=0}^{j-1}
\pi(n,v)f_i(v)T_{i,j-n}
-
\pi(j,v)\frac{j}{\tau_i}.
\label{eq:S-cut-current}
\end{equation}
The first term is the total upward burst flux across the cut within the
fiber $v$, while the second is the downward death flux.  If
$K_i\equiv1$, only the term $n=j-1$ remains, and
\eqref{eq:S-cut-current} reduces exactly to the adjacent-edge current
used in the unit-birth paper \cite{anderson2026noise}.

\begin{lemma}[Marginal cut balance]
\label{lem:S-cut-balance}
For every $i\in\{1,\dots,N\}$ and every $j\ge1$,
\begin{equation}
        \sum_v\mathcal J_i(j;v)=0.
        \label{eq:S-zero-cut-current}
\end{equation}
Equivalently,
\begin{equation}
        \frac{j\pi_i(j)}{\tau_i}
        =
        \sum_{n=0}^{j-1}
        \pi_i(n)m_i(n)T_{i,j-n}.
        \label{eq:S-cut}
\end{equation}
\end{lemma}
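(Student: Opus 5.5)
We apply Lemma~\ref{lem:stationary_identity_lemma} to the indicator test function $\varphi_j(x):=\mathbf 1\{x_i\ge j\}$ for fixed $j\ge1$. The stationary identity $\E_\pi[(L\varphi_j)(X)]=0$ will then express exactly that the upward burst flux across the cut $\{x_i<j\}\,|\,\{x_i\ge j\}$ balances the downward death flux. Summing the resulting identity over $v$ (equivalently, conditioning on $X_i$) gives \eqref{eq:S-zero-cut-current} and \eqref{eq:S-cut}.

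\textbf{Computing the generator.} Only jumps in coordinate $i$ can change $\varphi_j$. A burst of size $k$ from a state $(n,v)$ with $n<j$ changes $\varphi_j$ by $+1$ precisely when $n+k\ge j$, that is, when $k\ge j-n$. Bursts from states with $n\ge j$ do not change $\varphi_j$. A death from $(n,v)$ changes $\varphi_j$ by $-1$ precisely when $n=j$. Hence
\[
(L\varphi_j)(n,v)=\mathbf 1\{n<j\}\,f_i(v)\,T_{i,j-n}-\mathbf 1\{n=j\}\,\frac{j}{\tau_i}.
\]

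\textbf{Integrability.} The quantity $\sum_{y\ne x}q(x,y)|\varphi_j(y)-\varphi_j(x)|$ is bounded by $f_i(x_{-i})+j/\tau_i$, whose stationary expectation is finite by (A2). Assumption (A3) together with (A1) gives $\E_\pi[\Lambda(X)]<\infty$, so Lemma~\ref{lem:stationary_identity_lemma} applies. It yields
\[
0=\sum_{v}\Bigl[\sum_{n=0}^{j-1}\pi(n,v)f_i(v)T_{i,j-n}-\pi(j,v)\frac{j}{\tau_i}\Bigr]=\sum_v\mathcal J_i(j;v).
\]
Here the interchange of the sums over $n$ and $v$ is justified by nonnegativity of the terms and finiteness of $\mu_i$. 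This proves \eqref{eq:S-zero-cut-current}.

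\textbf{Passing to the marginal form.} We substitute $\pi(n,v)=\pi_i(n)\pi^i_n(v)$ and use $\sum_v\pi^i_n(v)f_i(v)=m_i(n)$ and $\sum_v\pi(j,v)=\pi_i(j)$. This converts \eqref{eq:S-zero-cut-current} into \eqref{eq:S-cut}.

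The only step needing care is the integrability check, and here it is mild. A fallback, if one prefers to avoid the general lemma, is to use the bounded function $\varphi_j$ directly with nonexplosion. The identity is purely a flux balance and does not require the burst law to be geometric.
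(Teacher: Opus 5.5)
Your proposal is correct and follows essentially the same route as the paper. Both apply Lemma~\ref{lem:stationary_identity_lemma} to the indicator $\varphi_j(x)=\mathbf 1_{\{x_i\ge j\}}$, compute $L\varphi_j$ through the tail probabilities $T_{i,j-n}$, and condition on $X_i$ to reach the marginal form. The only minor difference is in the integrability check: you use the bound $f_i(x_{-i})+j/\tau_i$, finite by (A2), where the paper bounds by $\E_\pi[\Lambda(X)]$; both are valid.
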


\begin{proof}
This is the same argument used to prove
\cite[Lemma 3.6]{anderson2026noise}.  Fix $i$ and $j\ge1$, and let
\[
        \varphi_j(x):=\mathbf 1_{\{x_i\ge j\}}.
\]
Since $\varphi_j$ is bounded,
\[
\sum_x\pi(x)\sum_{y\ne x}
q(x,y)|\varphi_j(y)-\varphi_j(x)|
\le
\E_\pi[\Lambda(X)]
<\infty.
\]
Therefore Lemma~\ref{lem:stationary_identity_lemma} gives
$\E_\pi[L\varphi_j]=0$.

For $\varphi_j(x)=\mathbf 1_{\{x_i\ge j\}}$, a death changes $\varphi_j$
only when $x_i=j$, while a burst starting from level $n<j$ changes
$\varphi_j$ precisely when $K_i\ge j-n$.  Therefore
\[
(L\varphi_j)(n,v)
=
f_i(v)\,T_{i,j-n}\mathbf 1_{\{n<j\}}
-
\frac{j}{\tau_i}\mathbf 1_{\{n=j\}}.
\]
Using $\E_\pi[L\varphi_j]=0$, we obtain
\begin{align*}
0
&=
\sum_{n=0}^{j-1}\sum_v
\pi(n,v)f_i(v)T_{i,j-n}
-
\frac{j}{\tau_i}\sum_v\pi(j,v) \\
&=
\sum_{n=0}^{j-1}
\pi_i(n)m_i(n)T_{i,j-n}
-
\frac{j\pi_i(j)}{\tau_i}.
\end{align*}
Rearranging proves \eqref{eq:S-cut}.  Equation~\eqref{eq:S-zero-cut-current}
then follows immediately from the definition of $\mathcal J_i(j;v)$.
\end{proof}

\subsection{Information flow and cancellation}

Throughout this subsection, the burst distributions are arbitrary positive
integer-valued distributions with finite second moments; they need not be
geometric.  We assume throughout that Assumption~\ref{ass:S-standing} holds.

Recall that $L=\sum_{i=1}^N L_i$, where
\begin{align}
(L_i\varphi)(x)
&:=
f_i(x_{-i})\sum_{k\ge1}p_{i,k}
\bigl[\varphi(x+ke_i)-\varphi(x)\bigr]
+ \frac{x_i}{\tau_i}
\bigl[\varphi(x-e_i)-\varphi(x)\bigr],
\label{eq:S-coordinate-generator}
\end{align}
with the death term omitted when $x_i=0$.

Define the logarithmic potentials
\begin{equation}
        g(x):=-\log\pi(x),
        \qquad
        g_i(x):=-\log\pi_i(x_i),
        \qquad
        h_i(x):=-\log\pi_{x_i}^i(x_{-i}).
        \label{eq:S-log-potentials}
\end{equation}
Since
\[
        \pi(x)=\pi_i(x_i)\pi_{x_i}^i(x_{-i}),
\]
we have
\begin{equation}
        g(x)=g_i(x)+h_i(x).
        \label{eq:S-log-decomposition}
\end{equation}

The following definition is the burst analogue of
\cite[Definition 2.1]{anderson2026noise}.

\begin{definition}[Information flow associated with coordinate $i$]
\label{def:S-information-flow}
Assuming that the series below is absolutely convergent, define
\begin{align}
(\dot I)_{X_i}
&:=
\sum_{n\ge0}
\sum_{v\in\mathbb Z_{\ge0}^{N-1}}
\sum_{k\ge1}
\pi(n,v)f_i(v)p_{i,k}
\log\frac{\pi_{n+k}^i(v)}{\pi_n^i(v)}
\notag\\
&\quad+
\sum_{n\ge1}
\sum_{v\in\mathbb Z_{\ge0}^{N-1}}
\pi(n,v)\frac{n}{\tau_i}
\log\frac{\pi_{n-1}^i(v)}{\pi_n^i(v)}.
\label{eq:S-flow-expanded}
\end{align}
\end{definition}

The following result is the burst analogue of
\cite[Lemma 3.5]{anderson2026noise}.  The proof is unchanged in
structure: one uses the comparison $0\le h_i\le g$, restricts the
edge-integrability estimate to transitions in coordinate $i$, and then
identifies the resulting series with $-\E_\pi[(L_i h_i)(X)]$.  The only
difference is that the single unit-birth transition
$(n,v)\to(n+1,v)$ is replaced by the family of burst transitions
$(n,v)\to(n+k,v)$, summed over $k\ge1$.  We include the details because
these burst transitions no longer pair one-to-one with the adjacent
death transitions, so the information flow is written as separate birth
and death contributions.

\begin{lemma}[Absolute convergence and generator representation]
\label{lem:S-flow-representation}
For every $i\in\{1,\dots,N\}$, the series in
\eqref{eq:S-flow-expanded} is absolutely convergent.  Moreover,
$L_i h_i\in L^1(\pi)$ and
\begin{equation}
        (\dot I)_{X_i}
        =
        -\E_\pi[(L_i h_i)(X)].
        \label{eq:S-flow-generator}
\end{equation}
\end{lemma}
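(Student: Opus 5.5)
The plan is to compare $h_i$ with $g$, use the edge-entropy bound of Lemma~\ref{lem:S-log-integrability} restricted to coordinate-$i$ transitions to get absolute convergence, and then read off \eqref{eq:S-flow-generator} term by term.

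\emph{Step 1: the sandwich $0\le h_i\le g$.} Since $\pi^i_{x_i}(x_{-i})=\pi(x)/\pi_i(x_i)\ge\pi(x)$ and $\pi^i_{x_i}(x_{-i})\le1$, we have
\[
0\le h_i(x)\le g(x)=|\log\pi(x)|.
\]
These quantities are finite by full support (A0).

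\emph{Step 2: rewrite each summand.} A burst transition $(n,v)\to(n+k,v)$ occurs at rate $q=f_i(v)p_{i,k}$. Its summand in \eqref{eq:S-flow-expanded} is
\[
\pi(n,v)\,q\,\bigl[h_i(n,v)-h_i(n+k,v)\bigr].
\]
A death transition $(n,v)\to(n-1,v)$ occurs at rate $n/\tau_i$, and its summand is
\[
\pi(n,v)\,\tfrac{n}{\tau_i}\bigl[h_i(n,v)-h_i(n-1,v)\bigr].
\]
Hence the absolute value of each summand is at most
\[
\pi(x)\,q(x,y)\bigl(|\log\pi(x)|+|\log\pi(y)|\bigr),
\]
where $y$ ranges over coordinate-$i$ transitions only. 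These transitions form a subset of all transitions $y\ne x$, so by \eqref{eq:S-edge-entropy} the full triple series (over $n$, $v$, $k$, and the death terms) converges absolutely. This is the main point requiring care, because bursts no longer pair with deaths. The cure is to bound every edge separately rather than pairing, and nothing more is needed.

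\emph{Step 3: the generator representation.} Absolute convergence means that condition \eqref{eq:S-domain} holds for $\varphi=h_i$ with $L$ replaced by $L_i$. Precisely,
\[
\sum_x\pi(x)\sum_{y}q_i(x,y)\,|h_i(y)-h_i(x)|<\infty.
\]
Two consequences follow.

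First, for $\pi$-a.e.\ $x$, and hence every $x$ by full support, the series defining $(L_ih_i)(x)$ converges absolutely. Moreover $|L_ih_i|$ is dominated by the inner sum, so $L_ih_i\in L^1(\pi)$.

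Second, Fubini/Tonelli lets us regroup the absolutely convergent series as
\[
\sum_x\pi(x)(L_ih_i)(x)=\sum_x\pi(x)\sum_y q_i(x,y)\bigl[h_i(y)-h_i(x)\bigr].
\]
By Step 2 this equals minus the expanded series, which gives $(\dot I)_{X_i}=-\E_\pi[(L_ih_i)(X)]$.

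Note that we do not need $\E_\pi[L_ih_i]=0$ here; only the identification of the two series is required. The degenerate death term at $n=0$ is absent on both sides.
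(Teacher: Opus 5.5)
Your proposal is correct and follows essentially the same route as the paper: you use the sandwich $0\le h_i\le g$, bound the birth and death edges separately (no pairing) by restricting the edge-entropy estimate \eqref{eq:S-edge-entropy} to coordinate-$i$ transitions, and then identify the absolutely convergent series termwise with $-\E_\pi[(L_ih_i)(X)]$. As you note, the proof does not need $\E_\pi[(L_ih_i)(X)]=0$.
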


\begin{proof}
Since $\pi_i(n)\le1$ and
\[
        \pi_n^i(v)=\frac{\pi(n,v)}{\pi_i(n)}\ge\pi(n,v),
\]
we have
\begin{equation}
        0\le g_i(n,v)\le g(n,v),
        \qquad
        0\le h_i(n,v)\le g(n,v).
        \label{eq:S-log-comparison}
\end{equation}

Consider first the birth transitions in coordinate $i$.  By
\eqref{eq:S-log-comparison},
\begin{align*}
&\sum_{n,v,k}
\pi(n,v)f_i(v)p_{i,k}
\left|h_i(n+k,v)-h_i(n,v)\right|
\\
&\qquad\le
\sum_{n,v,k}
\pi(n,v)f_i(v)p_{i,k}
\bigl[g(n+k,v)+g(n,v)\bigr]
<\infty,
\end{align*}
where finiteness follows by restricting the edge-integrability estimate
\eqref{eq:S-edge-entropy} to the $i$th-coordinate birth transitions.

Similarly, for the death transitions,
\begin{align*}
&\sum_{n\ge1,v}
\pi(n,v)\frac{n}{\tau_i}
\left|h_i(n-1,v)-h_i(n,v)\right|
\\
&\qquad\le
\sum_{n\ge1,v}
\pi(n,v)\frac{n}{\tau_i}
\bigl[g(n-1,v)+g(n,v)\bigr]
<\infty.
\end{align*}
Thus the series in \eqref{eq:S-flow-expanded} is absolutely convergent,
and $L_i h_i\in L^1(\pi)$.

The following termwise calculation is therefore justified:
\begin{align*}
-\E_\pi[(L_i h_i)(X)]
&=
\sum_{n,v,k}
\pi(n,v)f_i(v)p_{i,k}
\bigl[h_i(n,v)-h_i(n+k,v)\bigr]
\\
&\quad+
\sum_{n\ge1,v}
\pi(n,v)\frac{n}{\tau_i}
\bigl[h_i(n,v)-h_i(n-1,v)\bigr].
\end{align*}
Since $h_i(n,v)=-\log\pi_n^i(v)$, this becomes exactly
\eqref{eq:S-flow-expanded}, proving \eqref{eq:S-flow-generator}.
\end{proof}

To prove information-flow cancellation, it remains to show that
\[
        \E_\pi[(L_i g_i)(X)]=0.
\]
In the unit-birth argument, this follows by pairing adjacent marginal levels
using the corresponding flux-balance identity.  For general bursts, a single
production event may skip several levels, so that adjacent-level pairing is
no longer available.  Instead, we average the coordinate-$i$ production rate
conditional on $X_i=n$.  This produces a one-dimensional burst generator with
birth rate $m_i(n)$ and stationary distribution $\pi_i$.  Applying the
stationary generator identity to $-\log\pi_i$ then gives the required
equality.

\begin{lemma}[Stationarity of the conditional-rate marginal generator]
\label{lem:S-marginal-generator}
For each $i$, define
\begin{equation}
(\overline L_i\psi)(n)
:=
m_i(n)\sum_{k\ge1}p_{i,k}
\bigl[\psi(n+k)-\psi(n)\bigr]
+
\frac{n}{\tau_i}
\bigl[\psi(n-1)-\psi(n)\bigr],
\label{eq:S-marginal-generator}
\end{equation}
with the death term omitted when $n=0$.  Then $\pi_i$ is stationary for
$\overline L_i$.  Moreover,
\begin{equation}
        \E_\pi[(L_i g_i)(X)]=0.
        \label{eq:S-marginal-log-zero}
\end{equation}
\end{lemma}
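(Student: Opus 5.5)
To prove that $\pi_i$ is stationary for $\overline L_i$, I would check the global balance equations of the one-dimensional chain directly rather than appeal to reversibility, which is unavailable because bursts skip levels. In this setting global balance is equivalent to the cut-balance identities \eqref{eq:S-cut}. Concretely, for a bounded $\psi$ on $\mathbb Z_{\ge0}$ I would write $\psi(n)=\psi(0)+\sum_{j\ge1}[\psi(j)-\psi(j-1)]\mathbf 1_{\{n\ge j\}}$. This reduces the identity $\sum_n\pi_i(n)(\overline L_i\psi)(n)=0$ to the same identity for the indicator functions $\psi_j(n)=\mathbf 1_{\{n\ge j\}}$. Exactly as in the proof of Lemma~\ref{lem:S-cut-balance}, one has $(\overline L_i\psi_j)(n)=m_i(n)T_{i,j-n}\mathbf 1_{\{n<j\}}-\frac{j}{\tau_i}\mathbf 1_{\{n=j\}}$. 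Hence $\sum_n\pi_i(n)(\overline L_i\psi_j)(n)=0$ is precisely \eqref{eq:S-cut}.

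An alternative route to the same conclusion uses the master equation. Sum the stationary balance equation for $\pi$ at the states $(n,v)$ over $v$, keeping only the coordinate-$i$ transitions. Transitions in the other coordinates $\ell\neq i$ leave $x_i$ unchanged, so their total contribution summed over $v$ should vanish. Making this rigorous needs some care with interchanging sums, and that is why I prefer the cut route. The only analytic points are the interchanges of sums, and these are justified because $\E_\pi[m_i(X_i)]=\mu_i<\infty$ and $\E_\pi[X_i]<\infty$.

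For \eqref{eq:S-marginal-log-zero}, the key observation is that $g_i(x)=-\log\pi_i(x_i)$ depends only on $x_i$, and that $L_i$ acts on such a function through the $x_i$ jumps alone. Conditioning on $X_i=n$ therefore gives
\[
\E_\pi[(L_ig_i)(X)]=\sum_n\pi_i(n)(\overline L_i\psi)(n),\qquad \psi(n):=-\log\pi_i(n),
\]
because $f_i(v)$ averages to $m_i(n)$ under $\pi^i_n$. This step requires absolute convergence of $\sum_{n,v,k}\pi(n,v)f_i(v)p_{i,k}|\psi(n+k)-\psi(n)|$ and of the corresponding death sum. I would obtain it from \eqref{eq:S-log-comparison}, which gives $0\le g_i\le g$, together with the edge-integrability bound \eqref{eq:S-edge-entropy} restricted to coordinate-$i$ transitions, in the same way as in the proof of Lemma~\ref{lem:S-flow-representation}. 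It then remains to show that $\sum_n\pi_i(n)(\overline L_i\psi)(n)=0$ for this unbounded $\psi$.

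This last step is the main obstacle, because $\psi$ is unbounded and so the stationarity established above for bounded test functions does not apply directly. I would apply Lemma~\ref{lem:stationary_identity_lemma} to the one-dimensional chain with rates given by $\overline L_i$ and the full-support stationary law $\pi_i$. Its hypotheses are $\E_{\pi_i}[\overline\Lambda]=\mu_i+\E_\pi[X_i]/\tau_i<\infty$ and condition \eqref{eq:S-domain} for $\psi$. The quantity in \eqref{eq:S-domain} is exactly the absolutely convergent sum controlled in the previous paragraph, by Jensen/Tonelli: $m_i(n)\pi_i(n)=\sum_v\pi(n,v)f_i(v)$. Lemma~\ref{lem:stationary_identity_lemma} then applies. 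One caveat is that the lemma, as cited, assumes that $\pi_i$ is stationary in the generator sense used in \cite{anderson2026noise}, meaning global balance. That is what the first paragraph establishes, since global balance is equivalent to the bounded-test-function identity for indicators of single states, and these are finite combinations of the $\psi_j$. The lemma therefore gives $\E_{\pi_i}[\overline L_i\psi]=0$, which proves \eqref{eq:S-marginal-log-zero}.
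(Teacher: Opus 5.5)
Your proposal is correct and follows the paper's argument in all essentials. You condition on $X_i=n$ to identify $\E_\pi[(L_ig_i)(X)]$ with $\sum_n\pi_i(n)(\overline{L}_i\psi)(n)$, control absolute convergence through $0\le g_i\le g$ and \eqref{eq:S-edge-entropy}, and apply Lemma~\ref{lem:stationary_identity_lemma} to the one-dimensional generator using $\mu_i+\E_\pi[X_i]/\tau_i<\infty$. The one difference is that you obtain stationarity of $\pi_i$ from the cut-balance identities \eqref{eq:S-cut} via level indicators, whereas the paper applies Lemma~\ref{lem:stationary_identity_lemma} directly to $\varphi(x)=\psi(x_i)$ for bounded $\psi$ and then conditions; these routes are equivalent, since \eqref{eq:S-cut} is exactly that identity for $\psi=\mathbf 1_{\{\cdot\ge j\}}$.
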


\begin{proof}
First note that $m_i(n)<\infty$ for every $n$.  Indeed,
\[
        \sum_{n\ge0}\pi_i(n)m_i(n)=\mu_i<\infty,
\]
and $\pi_i(n)>0$ by full support.

Let $\psi:\mathbb Z_{\ge0}\to\mathbb R$ be bounded, and consider the
full-state test function $\varphi(x)=\psi(x_i)$.  Its integrability
condition in Lemma~\ref{lem:stationary_identity_lemma} is bounded by
\[
        2\|\psi\|_\infty
        \E_\pi\left[
        f_i(X_{-i})+\frac{X_i}{\tau_i}
        \right]
        <\infty.
\]
Hence
\[
        0
        =
        \E_\pi[(L\varphi)(X)]
        =
        \E_\pi[(L_i\varphi)(X)],
\]
because jumps in coordinates other than $i$ do not change $\varphi$.
Conditioning on $X_i=n$ gives
\[
        0
        =
        \sum_{n\ge0}
        \pi_i(n)(\overline L_i\psi)(n).
\]
Thus $\pi_i$ is stationary for $\overline L_i$.

Now let
\[
        \widetilde g_i(n):=-\log\pi_i(n),
\]
so that $g_i(x)=\widetilde g_i(x_i)$.  We verify the integrability
condition needed to apply Lemma~\ref{lem:stationary_identity_lemma} to
$\widetilde g_i$ and $\overline L_i$.  We have
\begin{align*}
\sum_{n\ge0}\pi_i(n)m_i(n) &
\sum_{k\ge1}p_{i,k}
\left|\widetilde g_i(n+k)-\widetilde g_i(n)\right| +
\sum_{n\ge1}\pi_i(n)\frac{n}{\tau_i}
\left|\widetilde g_i(n-1)-\widetilde g_i(n)\right|
\\
&=
\sum_{n,v,k}
\pi(n,v)f_i(v)p_{i,k}
\left|g_i(n+k,v)-g_i(n,v)\right|
 +
\sum_{n\ge1,v}
\pi(n,v)\frac{n}{\tau_i}
\left|g_i(n-1,v)-g_i(n,v)\right|.
\end{align*}
By \eqref{eq:S-log-comparison}, this is bounded by the corresponding
portion of \eqref{eq:S-edge-entropy}, and is therefore finite.

Also,
\[
        \sum_{n\ge0}\pi_i(n)
        \left(m_i(n)+\frac{n}{\tau_i}\right)
        =
        \mu_i+\frac{\E_\pi[X_i]}{\tau_i}
        <\infty.
\]
Lemma~\ref{lem:stationary_identity_lemma}, applied to the one-dimensional
generator $\overline L_i$, therefore gives
\[
        \sum_{n\ge0}
        \pi_i(n)(\overline L_i\widetilde g_i)(n)
        =
        0.
\]
Finally, conditioning on $X_i$ shows that
\[
        \E_\pi[(L_i g_i)(X)]
        =
        \sum_{n\ge0}
        \pi_i(n)(\overline L_i\widetilde g_i)(n),
\]
which proves \eqref{eq:S-marginal-log-zero}.
\end{proof}

We can now prove the cancellation identity.  The argument is the burst
analogue of the entropy-decomposition argument in
\cite[Theorem 2.2, part (3)]{anderson2026noise}.

\begin{lemma}[Information-flow cancellation]
\label{lem:S-flow-cancellation}
We have
\begin{equation}
        \sum_{i=1}^N(\dot I)_{X_i}=0.
        \label{eq:S-flow-cancellation}
\end{equation}
\end{lemma}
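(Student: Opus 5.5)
The plan is to write each information flow as the expectation of a coordinate generator applied to the full log-potential, and then to sum these over $i$.

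By Lemma~\ref{lem:S-flow-representation}, $(\dot I)_{X_i}=-\E_\pi[(L_ih_i)(X)]$, with $L_ih_i\in L^1(\pi)$. The decomposition \eqref{eq:S-log-decomposition} gives $h_i=g-g_i$. Hence, provided $L_ig$ and $L_ig_i$ are separately integrable, we can write
\[
(\dot I)_{X_i}=-\E_\pi[(L_ig)(X)]+\E_\pi[(L_ig_i)(X)].
\]
Lemma~\ref{lem:S-marginal-generator} shows the second term vanishes. Therefore $(\dot I)_{X_i}=-\E_\pi[(L_ig)(X)]$.

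Next we sum over $i$, using $L=\sum_iL_i$. Lemma~\ref{lem:S-log-integrability} gives $Lg\in L^1(\pi)$ and $\E_\pi[(Lg)(X)]=0$. Provided each $L_ig$ is integrable, linearity of expectation then yields
\[
\sum_i(\dot I)_{X_i}=-\sum_i\E_\pi[(L_ig)(X)]=-\E_\pi[(Lg)(X)]=0.
\]

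The only real issue is justifying the separate integrability of $L_ig$ and $L_ig_i$, so that the splitting is legitimate rather than formal. I would bound
\[
|(L_ig)(x)|\le\sum_{y}q_i(x,y)\bigl(|g(y)|+|g(x)|\bigr),
\]
where $q_i$ denotes the rates of the coordinate-$i$ transitions. These rates form a subset of all transitions, so the edge-entropy bound \eqref{eq:S-edge-entropy} gives absolute summability against $\pi$. Since $0\le g_i\le g$ by \eqref{eq:S-log-comparison}, the same bound controls $L_ig_i$. With absolute convergence in hand, the termwise rearrangements (splitting $h_i=g-g_i$ and interchanging the finite sum over $i$ with the expectation) are justified. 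No property of the burst law beyond what Assumption~\ref{ass:S-standing} supplies is used, so the argument holds for arbitrary positive burst distributions.
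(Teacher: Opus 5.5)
Your proof is correct and follows essentially the same route as the paper. You decompose $g=g_i+h_i$, use Lemma~\ref{lem:S-marginal-generator} to get $\E_\pi[(L_ig_i)(X)]=0$, and combine $\E_\pi[(Lg)(X)]=0$ from Lemma~\ref{lem:S-log-integrability} with the representation $(\dot I)_{X_i}=-\E_\pi[(L_ih_i)(X)]$. The only difference is cosmetic: you pass through the per-coordinate identity $(\dot I)_{X_i}=-\E_\pi[(L_ig)(X)]$ before summing, while the paper sums first. Your separate integrability check for $L_ig$ is valid, and it also follows from $L_ig=L_ig_i+L_ih_i$ pointwise.
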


\begin{proof}
By Lemma~\ref{lem:S-log-integrability},
\[
        Lg\in L^1(\pi),
        \qquad \text{and} \qquad 
        \E_\pi[(Lg)(X)]=0.
\]
For each $i$, \eqref{eq:S-log-decomposition} gives
\[
        L_i g=L_i(g_i+h_i)=L_i g_i+L_i h_i.
\]
Since $L=\sum_{i=1}^N L_i$, it follows pointwise that
\[
\begin{aligned}
        Lg
        &=
        \sum_{i=1}^N L_i g =
        \sum_{i=1}^N L_i(g_i+h_i) =
        \sum_{i=1}^N L_i g_i
        +
        \sum_{i=1}^N L_i h_i.
\end{aligned}
\]
The terms $L_i g_i$ and $L_i h_i$ are integrable by
Lemmas~\ref{lem:S-marginal-generator} and
\ref{lem:S-flow-representation}.  Therefore, taking stationary
expectations gives
\[
\begin{aligned}
0
&=
\E_\pi[(Lg)(X)] \\
&=
\sum_{i=1}^N\E_\pi[(L_i g_i)(X)]
+
\sum_{i=1}^N\E_\pi[(L_i h_i)(X)].
\end{aligned}
\]
The first sum vanishes by
Lemma~\ref{lem:S-marginal-generator}.  Hence
\[
        \sum_{i=1}^N\E_\pi[(L_i h_i)(X)]=0.
\]
Using the representation
\[
        (\dot I)_{X_i}=-\E_\pi[(L_i h_i)(X)]
\]
from Lemma~\ref{lem:S-flow-representation} gives the result,
\[
        \sum_{i=1}^N(\dot I)_{X_i}=0.
\]
\end{proof}

\subsection{Geometric componentwise inequality and proof of
Theorem~\ref{thm:geometric-main}}

The preceding results apply to arbitrary positive integer-valued burst
distributions with finite second moments.  We now specialize to positive
geometric bursts:
\begin{equation}
        p_{i,k}
        :=
        \mathbb P(K_i=k)
        =
        p_iq_i^{k-1},
        \qquad
        k\ge1,
        \qquad
        q_i:=1-p_i,
        \qquad
        \kappa_i=\frac1{p_i},
        \label{eq:S-geometric-law}
\end{equation}
where $p_i\in(0,1]$.  The endpoint $p_i=1$ is the unit-birth case.  We use the convention $q_i^0=1$.  Thus, when $p_i=1$, one has
$q_i=0$, $p_{i,1}=1$, and $p_{i,k}=0$ for every $k\ge2$.

We also recall the notation
\[
        T_{i,r}:=\mathbb P(K_i\ge r).
\]

For the law \eqref{eq:S-geometric-law},
\[
        \E[K_i]=\frac1{p_i},
        \qquad
        \E[K_i^2]=\frac{2-p_i}{p_i^2}.
\]
Consequently, the general burst baseline satisfies
\begin{equation}
        B_i
        =
        \frac{\E[K_i^2]+\E[K_i]}{2\E[K_i]}
        =
        \frac1{p_i}
        =
        \kappa_i.
        \label{eq:S-geometric-baseline}
\end{equation}

The proof below parallels
\cite[Theorem 2.2, part (2)]{anderson2026noise} through the log-sum
and elementary logarithmic estimates.  The new step is that births may
cross several marginal levels.  For geometric bursts,
\[
        q_i^{k-1}=T_{i,k},
\]
and this tail identity allows the resulting nonlocal expression to be
evaluated using the cut-balance identity \eqref{eq:S-cut}.

\begin{proposition}[Geometric componentwise information inequality]
\label{prop:S-geometric-local}
Suppose Assumption~\ref{ass:S-standing} holds and that each $K_i$ satisfies
\eqref{eq:S-geometric-law} for some $p_i \in (0,1]$. Then, for each $i\in\{1,\dots,N\}$,
\begin{equation}
        (\dot I)_{X_i}
        \le
        \frac{1}{\tau_i\kappa_i}
        \bigl(F_{X_i}-\kappa_i\bigr).
        \label{eq:S-geometric-local}
\end{equation}
\end{proposition}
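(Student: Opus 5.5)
The plan is to follow the unit-birth argument of \cite[Theorem 2.2, part (2)]{anderson2026noise}: split $(\dot I)_{X_i}$ in \eqref{eq:S-flow-expanded} into its death part and its birth part and bound each. The geometric law enters at one point, where it lets the birth part be rewritten as a sum over adjacent levels whose weights are fixed by the cut balance \eqref{eq:S-cut}. After that rewriting the problem has the same form as in the unit-birth case.

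\emph{Death part.} Conditioning on $X_i=n$, it equals
\[
-\sum_{n\ge1}\frac{n\pi_i(n)}{\tau_i}\,\KL\bigl(\pi_n^i\,\|\,\pi_{n-1}^i\bigr).
\]
I will keep it in this form rather than discard it at once, in case the final step needs it; at minimum it is $\le0$.

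\emph{Birth part, first reduction.} For fixed $n$ and $k$, apply the log-sum inequality over $v$ with $a_v=\pi_n^i(v)f_i(v)$ and $b_v=\pi_{n+k}^i(v)f_i(v)$. This bounds the birth part by
\[
\sum_{n,k}\pi_i(n)p_{i,k}\,m_i(n)\log\frac{m_i(n+k)}{m_i(n)},
\]
exactly as in the unit-birth paper. Fibers where $f_i$ or $m_i$ vanish are handled with the usual conventions $0\log 0=0$ and $a\log(b/0)$.

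\emph{Birth part, reduction to adjacent levels.} This is the new step. Telescope the logarithm,
\[
\log\frac{m_i(n+k)}{m_i(n)}=\sum_{l=n+1}^{n+k}\log\frac{m_i(l)}{m_i(l-1)},
\]
and interchange the sums, which is justified by the absolute convergence in Lemma~\ref{lem:S-flow-representation}. The coefficient of $d_l:=\log\bigl(m_i(l)/m_i(l-1)\bigr)$ is then
\[
\sum_{n<l}\pi_i(n)m_i(n)\sum_{k\ge l-n}p_{i,k}.
\]
For the geometric law, $p_{i,k}=p_iT_{i,k}$, so $\sum_{k\ge r}p_{i,k}=T_{i,r}$. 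The coefficient therefore becomes $\sum_{n<l}\pi_i(n)m_i(n)T_{i,l-n}$, which by \eqref{eq:S-cut} equals $l\pi_i(l)/\tau_i$. Hence the birth part is bounded by
\[
\sum_{l\ge1}\frac{l\pi_i(l)}{\tau_i}\log\frac{m_i(l)}{m_i(l-1)}.
\]
This has the same shape as the unit-birth expression, but with weights $l\pi_i(l)/\tau_i$, whose total is $\E_\pi[X_i]/\tau_i=\kappa_i\mu_i$.

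\emph{Closing the bound.} The target right-hand side equals
\[
\frac{\Cov_\pi(f_i,X_i)}{\tau_i\kappa_i\mu_i}=\frac{\E_\pi[X_if_i(X_{-i})]-\mu_i\E_\pi[X_i]}{\E_\pi[X_i]},
\]
using \eqref{eq:S-fano}, \eqref{eq:S-geometric-baseline} and \eqref{eq:S-mean}. The route I will try first is to take Jensen with the probability weights $l\pi_i(l)/\E_\pi[X_i]$, follow it with $\log x\le x-1$, and then use \eqref{eq:S-cut} once more to replace $l\pi_i(l)/\tau_i$ by upward burst fluxes in the resulting sum. The aim is to obtain $\sum_n\pi_i(n)m_i(n)\,n$, which equals $\E_\pi[X_if_i]$, divided by the total weight. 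If that does not close, I will transcribe the unit-birth finishing estimate literally, with $\pi_i(l-1)m_i(l-1)$ replaced by $l\pi_i(l)/\tau_i$, and retain the death KL term if it is needed.

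I expect the main obstacle to be this last step: arranging the elementary estimate so that the weights $l\pi_i(l)/\tau_i$, where the unit case had $\pi_i(l-1)m_i(l-1)$, produce precisely the normalization $1/(\tau_i\kappa_i)$. A secondary task is checking integrability of the telescoped double series and treating the levels where $m_i$ vanishes.
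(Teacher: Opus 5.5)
Your log-sum reduction agrees with the paper. There is, however, a genuine gap at the step you flag as the main obstacle, and its source is that the geometric hypothesis is never actually used.

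In your telescoping step, $\sum_{k\ge r}p_{i,k}=T_{i,r}$ is simply the definition of $T_{i,r}=\mathbb P(K_i\ge r)$. It holds for every burst law, and \eqref{eq:S-cut} is law-independent as well. So your bound on the birth part,
\[
I_{i,+}\le\sum_{l\ge1}\frac{l\pi_i(l)}{\tau_i}\log\frac{m_i(l)}{m_i(l-1)},
\]
holds for arbitrary $K_i$.

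The counterexample of Section~\ref{sec:S-counterexamples} shows that the closing step must therefore use the geometric law. There both covariances are negative while $\sum_i(\dot I)_{X_i}=0$, since Lemma~\ref{lem:S-flow-cancellation} holds for all laws. Hence no argument valid for all laws can yield $(\dot I)_{X_i}\le c_i\Cov_\pi(f_i(X_{-i}),X_i)$ with $c_i>0$. Neither of your routes uses geometry, and both fail concretely:

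\begin{itemize}
\item Jensen followed by $\log x\le x-1$ gives $\sum_l\frac{l\pi_i(l)}{\tau_i}\frac{m_i(l)}{m_i(l-1)}-\kappa_i\mu_i$. This has ratios of consecutive $m_i$ where products are needed. Already for unit births it equals $\E_\pi[m_i(X_i+1)]-\mu_i$ rather than $\mu_i^{-1}\E_\pi[m_i(X_i)m_i(X_i+1)]-\mu_i$, and it does not imply the latter.
\item The literal transcription fails because $x\log(y/x)\le xy/a+a-2x$ only closes when the weight is $\pi_i(l-1)\,x$ with $x=m_i(l-1)$. Your weight $\sum_{n<l}\pi_i(n)m_i(n)T_{i,l-n}$ is not of that form. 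Forcing it leaves a remainder involving $\sum_l l\pi_i(l)/(\tau_i m_i(l-1))$, which nothing controls.
\end{itemize}

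The missing idea is to \emph{not} telescope. Apply $x\log(y/x)\le xy/\mu_i+\mu_i-2x$, with $x=m_i(n)$ and $y=m_i(n+k)$, to each $(n,k)$ term immediately after the log-sum step. The weight $\pi_i(n)p_{i,k}m_i(n)$ then pairs correctly with $x$, and summing gives $I_{i,+}\le S_i/\mu_i-\mu_i$, where $S_i=\sum_{n,k}\pi_i(n)m_i(n)p_{i,k}m_i(n+k)$.

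The geometric law enters exactly here, through the pointwise identity $p_{i,k}=p_iT_{i,k}$, which concerns $p_{i,k}$ itself rather than its tail. Reindexing by $j=n+k$ and applying \eqref{eq:S-cut} gives $S_i=\frac{p_i}{\tau_i}\E_\pi[X_im_i(X_i)]$. Writing $\mu_i=\frac{p_i}{\tau_i\mu_i}\E_\pi[X_i]\E_\pi[m_i(X_i)]$ then turns the bound into $\frac{p_i}{\tau_i\mu_i}\Cov_\pi(f_i(X_{-i}),X_i)=\frac{1}{\tau_i\kappa_i}(F_{X_i}-\kappa_i)$. When the rearrangement is legitimate, your telescoped sum equals the untelescoped one, so closing it amounts to undoing the telescoping.

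Working termwise also removes your secondary problem. Lemma~\ref{lem:S-flow-representation} controls the series in $\log(\pi^i_{n+k}(v)/\pi^i_n(v))$, not $\sum_l\frac{l\pi_i(l)}{\tau_i}\bigl|\log(m_i(l)/m_i(l-1))\bigr|$, so it does not justify your interchange. The paper instead only sums termwise inequalities whose right-hand sides have finite total.
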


\begin{proof}
We fix $i\in\{1,\dots,N\}$ throughout the proof.  Decompose the
information flow in \eqref{eq:S-flow-expanded} as
\begin{equation}
        (\dot I)_{X_i}
        =
        I_{i,+}+I_{i,-},
        \label{eq:S-flow-plus-minus}
\end{equation}
where
\begin{align}
I_{i,+}
&:=
\sum_{n\ge0}
\sum_{v\in\mathbb Z_{\ge0}^{N-1}}
\sum_{k\ge1}
\pi(n,v)f_i(v)p_iq_i^{k-1}
\log\frac{\pi_{n+k}^i(v)}{\pi_n^i(v)},
\label{eq:S-I-plus}
\\
I_{i,-}
&:=
\sum_{n\ge1}
\sum_{v\in\mathbb Z_{\ge0}^{N-1}}
\pi(n,v)\frac{n}{\tau_i}
\log\frac{\pi_{n-1}^i(v)}{\pi_n^i(v)}.
\label{eq:S-I-minus}
\end{align}
Both series are absolutely convergent by
Lemma~\ref{lem:S-flow-representation}.

We first bound the death contribution.  Recalling that
$\pi(n,v)=\pi_i(n)\pi_n^i(v)$, we have
\begin{align}
I_{i,-}
&=
\sum_{n\ge1}
\frac{n\pi_i(n)}{\tau_i}
\sum_v
\pi_n^i(v)
\log\frac{\pi_{n-1}^i(v)}{\pi_n^i(v)}
\notag\\
&=
-\sum_{n\ge1}
\frac{n\pi_i(n)}{\tau_i}
\KL\bigl(\pi_n^i\|\pi_{n-1}^i\bigr)
\le0,
\label{eq:S-death-nonpositive}
\end{align}
where
\[
        \KL(\alpha\|\beta)
        :=
        \sum_v\alpha(v)\log\frac{\alpha(v)}{\beta(v)}
\]
is the Kullback--Leibler divergence.  This is the direct analogue of
the estimate on the death contribution in the unit-birth proof of \cite{RKH2025,anderson2026noise}.

We next bound $I_{i,+}$.  The log-sum estimates below will produce terms of
the form
\[
        \log\frac{m_i(n+k)}{m_i(n)},
\]
so we first verify that these quantities are well defined.  We claim that for every
$n\ge0$,
\[
        0<m_i(n)<\infty.
\]
Finiteness follows from finiteness of  $\mu_i$,
\[
        \sum_{n\ge0}\pi_i(n)m_i(n) = \mu_i < \infty,
\]
and the fact that each term in the sum is nonnegative and $\pi_i(n)>0$.  Positivity follows
because $\mu_i>0$, so there exists $v_*$ with $f_i(v_*)>0$, while full
support gives $\pi_n^i(v_*)>0$ for every $n$.  Hence
\[
        m_i(n)
        =
        \sum_v \pi_n^i(v)f_i(v)
        >0.
\]

Fix $n\ge0$ and $k\ge1$.  The contribution to $I_{i,+}$ associated
with births from level $n$ to level $n+k$ is
\[
\sum_v
\pi(n,v)f_i(v)p_iq_i^{k-1}
\log\frac{\pi_{n+k}^i(v)}{\pi_n^i(v)}.
\]
If $p_i=1$, all terms with $k\ge2$ vanish.  Thus, in forming the ratios
below, we restrict to those $k$ for which $p_iq_i^{k-1}>0$; in the
endpoint case this means only $k=1$.

Using $\pi(n,v)=\pi_i(n)\pi_n^i(v)$, define
\[
\begin{aligned}
        b_v
        &:=
        \pi_i(n)\pi_n^i(v)f_i(v)p_iq_i^{k-1},\\
        a_v
        &:=
        \pi_i(n)\pi_{n+k}^i(v)f_i(v)p_iq_i^{k-1}.
\end{aligned}
\]
Terms with $f_i(v)=0$ contribute zero and may be omitted.  For the
remaining terms,
\[
        \frac{a_v}{b_v}
        =
        \frac{\pi_{n+k}^i(v)}{\pi_n^i(v)},
\]
so the preceding contribution can be written as
\[
        \sum_v b_v\log\frac{a_v}{b_v}.
\]
Moreover,
\[
        \sum_v b_v
        =
        \pi_i(n)m_i(n)p_iq_i^{k-1},
        \qquad
        \sum_v a_v
        =
        \pi_i(n)m_i(n+k)p_iq_i^{k-1}.
\]
The log-sum inequality,
\[
        \sum_v b_v\log\frac{a_v}{b_v}
        \le
        \left(\sum_v b_v\right)
        \log\frac{\sum_v a_v}{\sum_v b_v},
\]
therefore gives
\begin{align}
&\sum_v
\pi(n,v)f_i(v)p_iq_i^{k-1}
\log\frac{\pi_{n+k}^i(v)}{\pi_n^i(v)} =
\sum_v b_v\log\frac{a_v}{b_v} \le
\left(\sum_v b_v\right)
\log\frac{\sum_v a_v}{\sum_v b_v}
\notag\\
&\qquad=
\pi_i(n)m_i(n)p_iq_i^{k-1}
\log
\frac{
\pi_i(n)m_i(n+k)p_iq_i^{k-1}
}{
\pi_i(n)m_i(n)p_iq_i^{k-1}
}
\notag\\
&\qquad=
\pi_i(n)m_i(n)p_iq_i^{k-1}
\log\frac{m_i(n+k)}{m_i(n)}.
\label{eq:S-logsum-fixed-nk}
\end{align}

We next use the elementary inequality
\begin{equation}
        x\log\frac{y}{x}
        \le
        \frac{xy}{\mu_i}+\mu_i-2x,
        \qquad x,y>0,
        \label{eq:S-xy-ineq}
\end{equation}
which follows by writing $x\log\frac{y}{x} = x \log\frac{y}{\mu_i}+ x\log\frac{\mu_i}{x}$ and applying the basic inequality $\log r\le r-1$ to each term.  Applying
\eqref{eq:S-xy-ineq} to \eqref{eq:S-logsum-fixed-nk} with
$x=m_i(n)$ and $y=m_i(n+k)$ gives
\begin{align}
&\sum_v
\pi(n,v)f_i(v)p_iq_i^{k-1}
\log\frac{\pi_{n+k}^i(v)}{\pi_n^i(v)}
\notag\\
&\qquad\le
\pi_i(n)p_iq_i^{k-1}
\left[
\frac{m_i(n)m_i(n+k)}{\mu_i}
+\mu_i-2m_i(n)
\right].
\label{eq:S-logsum-xy-bound}
\end{align}

Summing \eqref{eq:S-logsum-xy-bound} over $n\ge0$ and $k\ge1$, the
last two terms on the right simplify immediately:
\begin{align*}
\sum_{n\ge0}\sum_{k\ge1}
\pi_i(n)p_iq_i^{k-1}\mu_i
&=
\mu_i
\left(\sum_{n\ge0}\pi_i(n)\right)
\left(\sum_{k\ge1}p_iq_i^{k-1}\right)
=
\mu_i,
\\
\sum_{n\ge0}\sum_{k\ge1}
2\pi_i(n)p_iq_i^{k-1}m_i(n)
&=
2
\left(\sum_{n\ge0}\pi_i(n)m_i(n)\right)
\left(\sum_{k\ge1}p_iq_i^{k-1}\right)
=
2\mu_i.
\end{align*}
Hence
\begin{equation}
I_{i,+}
\le
\frac{1}{\mu_i}
\sum_{n\ge0}\sum_{k\ge1}
\pi_i(n)p_iq_i^{k-1}m_i(n)m_i(n+k)
-\mu_i.
\label{eq:S-I-plus-pre-S}
\end{equation}
We therefore define
\begin{equation}
S_i
:=
\sum_{n\ge0}\sum_{k\ge1}
\pi_i(n)m_i(n)p_iq_i^{k-1}m_i(n+k),
\label{eq:S-bilinear-geometric}
\end{equation}
so that
\begin{equation}
        I_{i,+}\le \frac{S_i}{\mu_i}-\mu_i.
\end{equation}
We now study $S_i$, and prove that it is finite (note that all terms are non-negative, so there is no issue with conditional convergence).

Up to this point, the argument has used only the same log-sum estimates
as in the unit-birth case.  The essential use of the geometric
assumption now enters through the tail identity
\begin{equation}
        q_i^{r-1} = \mathbb P(K_i\ge r)= T_{i,r},
        \qquad r\ge1.
        \label{eq:S-geometric-tail}
\end{equation}
 For $p_i=1$, this identity is understood with $q_i^0=1$: namely,
$T_{i,1}=1$ and $T_{i,r}=0$ for $r\ge2$. The equality \eqref{eq:S-geometric-tail}
allows the sum in \eqref{eq:S-bilinear-geometric} to be evaluated using
the marginal cut-balance identity \eqref{eq:S-cut}.  
Specifically,  reindexing \eqref{eq:S-bilinear-geometric} by setting $j=n+k$ (which is justified since all terms are non-negative) and then using the identity \eqref{eq:S-geometric-tail} yields
\begin{align}
S_i &= p_i \sum_{j\ge1} m_i(j) \sum_{n=0}^{j-1} \pi_i(n)m_i(n)q_i^{j-n-1} \tag{reindexing}\\
&= p_i \sum_{j\ge1} m_i(j) \sum_{n=0}^{j-1} \pi_i(n)m_i(n) T_{i,j-n} \tag{Using \eqref{eq:S-geometric-tail}}\\
&= p_i \sum_{j\ge1} m_i(j)   \frac{j\pi_i(j)}{\tau_i}  \tag{marginal cut-balance identity \eqref{eq:S-cut}}
\end{align}
Consequently,
\begin{align}
S_i
&=
\frac{p_i}{\tau_i}
\sum_{j\ge1}j\pi_i(j)m_i(j)
\notag\\
&=
\frac{p_i}{\tau_i}
\E_\pi[X_i m_i(X_i)].
\label{eq:S-geometric-collapse}
\end{align}
Note that this quantity is finite because
\begin{align}
\E_\pi[X_i m_i(X_i)] &= \E_\pi[X_i f_i(X_{-i})] \tag{tower property}\\
&\le C_\Lambda
\E_\pi[s(X)(1+s(X))] <\infty\notag
\end{align}
by Assumption~\ref{ass:S-standing}\textnormal{(A1)} and
\textnormal{(A3)}.

At this point, we have 
\begin{align*}
I_{i,+} &\le \frac{p_i }{\tau_i \mu_i}\E_\pi[X_i m_i(X_i)] -\mu_i
\end{align*}
Using
\[
        \E_\pi[m_i(X_i)]=\mu_i
\]
and the mean identity (using that $\kappa_i = 1/p_i$)
\[
        \mu_i=\frac{p_i}{\tau_i}\E_\pi[X_i],
\]
we may write
\[
        \mu_i
        =
        \frac{p_i}{\tau_i\mu_i}
        \E_\pi[X_i]\E_\pi[m_i(X_i)].
\]
Therefore,
\begin{align}
I_{i,+}
&\le
\frac{p_i}{\tau_i\mu_i}
\left(
\E_\pi[X_i m_i(X_i)]
-
\E_\pi[X_i]\E_\pi[m_i(X_i)]
\right)
\notag\\
&=
\frac{p_i}{\tau_i\mu_i}
\Cov_\pi(m_i(X_i),X_i).
\end{align}
By the tower property,
\begin{equation}
        \Cov_\pi(m_i(X_i),X_i)
        =
        \Cov_\pi(f_i(X_{-i}),X_i),
        \label{eq:S-conditional-covariance}
\end{equation}
and by \eqref{eq:S-geometric-baseline} the Fano identity
\eqref{eq:S-fano} becomes
\[
        \frac{\Cov_\pi(f_i(X_{-i}),X_i)}{\mu_i}
        =
        F_{X_i}-\kappa_i.
\]
Combining the above with $p_i=1/\kappa_i$, and recalling that $I_{i,-}\le0$,  we  conclude
\[
(\dot I)_{X_i}
\le \frac{1}{\tau_i\kappa_i}
\bigl(F_{X_i}-\kappa_i\bigr),
\]
which was the desired result.
\end{proof}

\begin{proof}[Proof of Theorem~\ref{thm:geometric-main}]
Summing \eqref{eq:S-geometric-local} over
$i\in\{1,\dots,N\}$ gives
\[
        \sum_{i=1}^N(\dot I)_{X_i}
        \le
        \sum_{i=1}^N
        \frac{F_{X_i}-\kappa_i}{\tau_i\kappa_i}.
\]
By Lemma \ref{lem:S-flow-cancellation}
\[
        \sum_{i=1}^N(\dot I)_{X_i}=0.
\]
Therefore
\[
        \sum_{i=1}^N
        \frac{F_{X_i}-\kappa_i}{\tau_i\kappa_i}
        \ge0,
\]
which is precisely \eqref{eq:geom-tradeoff-main}.

Since every coefficient $1/(\tau_i\kappa_i)$ is strictly positive, if
$F_{X_k}<\kappa_k$ for some $k$ and
$F_{X_i}\le\kappa_i$ for every $i\ne k$, then the weighted sum in
\eqref{eq:geom-tradeoff-main} would be strictly negative.  Therefore, whenever
one component lies below its geometric-burst baseline, at least one other
component lies strictly above its own.  This completes the proof of
Theorem~\ref{thm:geometric-main}.
\end{proof}

\section{Two-component counterexamples with nongeometric bursts}
\label{sec:S-counterexamples}

We now prove the expansion asserted in \eqref{eq:analytic-counter-fano-main} for the model \eqref{eq:counter-burst-main}--\eqref{eq:analytic-counter-rates-main}. Fix $r\in(1/2,1]$.  We take $N=2$, set $\tau_i=1$ for $i\in\{1,2\}$, and let
\begin{equation}
        \mathbb P(K_i=1)=1-r,
        \qquad
        \mathbb P(K_i=2)=r,
    \label{eq:S-counter-burst-law}
\end{equation}
which yields
\[
        \E[K_i]=1+r,
        \qquad
        \E[K_i^2]=1+3r.
\]
Thus, we define the common $B_i$ as
\begin{equation}
        B(r)
        :=
        \frac{\E[K_i^2]+\E[K_i]}
        {2\E[K_i]}
        =
        \frac{1+2r}{1+r}.
\label{eq:S-counter-baseline}
\end{equation}

  Define
\begin{equation}
a(n)
=
\begin{cases}
1, & n=0,\\
\frac65, & n=1,\\
\frac95, & n\ge2,
\end{cases}
\qquad
b(n)
=
\begin{cases}
1, & n=0,\\
\frac45, & n=1,\\
\frac15, & n\ge2,
\end{cases}
\label{eq:S-counter-a-b}
\end{equation}
and, for $\varepsilon>0$, we set
\begin{equation}
        f_1^{(\varepsilon)}(n)=\varepsilon a(n),
        \qquad
        f_2^{(\varepsilon)}(n)=\varepsilon b(n).
\label{eq:S-counter-rates}
\end{equation}
These rates are bounded and strictly positive.  Therefore, for every
$\varepsilon>0$, the  conditions of Theorem~\ref{thm:S-sufficient-conditions} are met, and so Assumption \ref{ass:S-standing} holds.

\begin{proposition}
\label{prop:S-analytic-counterexample}
Let $\pi_\varepsilon$ denote the stationary distribution of the model
\eqref{eq:S-counter-burst-law}--\eqref{eq:S-counter-rates}.  Then, for each $i \in \{1,2\}$,
\begin{equation}
        F_{X_i} =  B(r) - \frac{2r(2r-1)}{75}\varepsilon^2+ O(\varepsilon^3), \qquad \text{as} \quad \varepsilon\downarrow0.
\label{eq:S-counter-fano-expansion}
\end{equation}
Consequently, for every fixed $r\in(1/2,1]$, both components lie strictly
below their common constant-rate burst baseline for all sufficiently small
$\varepsilon>0$.
\end{proposition}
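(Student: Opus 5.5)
The plan is to use the exact Fano identity \eqref{eq:S-fano} from Lemma~\ref{lem:S-moment-identities}, which reduces the claim to a small-$\varepsilon$ expansion of a single covariance. For component $1$, $f_1^{(\varepsilon)}=\varepsilon a$, so $\mu_1=\varepsilon\,\E_{\pi_\varepsilon}[a(X_2)]$ and
\[
F_{X_1}-B(r)=\frac{\Cov_{\pi_\varepsilon}(a(X_2),X_1)}{\E_{\pi_\varepsilon}[a(X_2)]},
\]
and analogously for component $2$ with $b(X_1)$ and $X_2$. Since $\E_{\pi_\varepsilon}[a(X_2)]=1+O(\varepsilon)$, it suffices to show
\[
\Cov_{\pi_\varepsilon}(a(X_2),X_1)=-\tfrac{2r(2r-1)}{75}\varepsilon^2+O(\varepsilon^3),
\]
and the same for $\Cov_{\pi_\varepsilon}(b(X_1),X_2)$. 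Writing $\tilde a=a-1$, which vanishes at $0$, we have $\Cov(a(X_2),X_1)=\E[\tilde a(X_2)X_1]-\E[\tilde a(X_2)]\E[X_1]$. Each of the four factors and products is of order at least $\varepsilon$, and both terms are $O(\varepsilon^2)$. So only the stationary law up to order $\varepsilon^2$ is needed: $\pi_\varepsilon(n,0)$ and $\pi_\varepsilon(0,m)$ through order $\varepsilon^2$, and the joint states $\pi_\varepsilon(n,m)$ with $n,m\ge1$ at leading order $\varepsilon^2$.

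\textbf{Step 1: a priori bound.} I will establish that $\pi_\varepsilon(x)=O(\varepsilon^{h(x)})$, where $h(x)$ is the minimal number of bursts needed to reach $x$ from the origin. I will also show that states with $h(x)\ge3$ contribute $O(\varepsilon^3)$ to all the moments above. For this I would couple with the independent dominating process whose production rates are the constant $\tfrac95\varepsilon$ for each species; that process is an explicit batch infinite-server queue with product-form stationary law. Alternatively, I would apply the generator identity (Lemma~\ref{lem:stationary_identity_lemma}) to indicator and linear test functions of "number of occupied coordinates" to get moment bounds of order $\varepsilon^3$ on the tail.

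\textbf{Step 2: order-by-order balance equations.} I will write the stationary master equation \eqref{eq:master-equation-main} and match powers of $\varepsilon$.
\begin{itemize}
\item At order $\varepsilon$, the single-species states $(n,0)$, $n\in\{1,2\}$, are fed from $(0,0)$ by species-$1$ bursts at rate $\varepsilon a(0)=\varepsilon$. They are drained by deaths, with the transition $(2,0)\to(1,0)$, and similarly for $(0,m)$. This gives explicit coefficients in terms of $r$.
\item At order $\varepsilon^2$, the joint states $(n,m)$ with $n,m\in\{1,2\}$ satisfy a finite triangular-ish linear system. There is inflow from $(n,0)$ via species-$2$ bursts at rate $\varepsilon b(n)$, and from $(0,m)$ via species-$1$ bursts at rate $\varepsilon a(m)$; the species-$1$ burst rate is $\varepsilon a$ evaluated at the current $X_2$. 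There is outflow by deaths at total rate $n+m$, and death transitions within the block.
\item I also need the $\varepsilon^2$ corrections to $\pi_\varepsilon(n,0)$ and $\pi_\varepsilon(0,m)$ for $n,m\le4$ (double bursts of one species), together with the return flux from joint states.
\end{itemize}
I expect, though have not checked, that the $\varepsilon^2$ corrections to the one-species marginals cancel in the covariance. This is because, without cross-dependence, the covariance would vanish identically; only the asymmetry of the rates $a(m)\neq1$, $b(n)\neq1$ feeding the joint block should survive.

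\textbf{Step 3: assembly.} I will substitute into $\E[\tilde a(X_2)X_1]-\E[\tilde a(X_2)]\E[X_1]$ and the analogous expression for component $2$, and simplify to $-\tfrac{2r(2r-1)}{75}\varepsilon^2$. The values $\tfrac65,\tfrac95$ and $\tfrac45,\tfrac15$ enter only through $\tilde a(1),\tilde a(2)$ and $\tilde b(1),\tilde b(2)$. The sign should come from the fact that $X_1$ reaching level $2$ in a single burst, which has weight $r$, makes the increments in $a$ and $b$ interact so as to give a factor $2r-1$.

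I expect the main obstacle to be the $\varepsilon^2$ linear system for the joint block together with the bookkeeping of which marginal corrections cancel. To keep this manageable I would first compute the covariance directly via the generator identity with test functions $x_1\mathbf 1_{\{x_2=m\}}$. This expresses $\Cov$ in terms of a few joint probabilities and may avoid solving for the full marginal corrections. The rigorous $O(\varepsilon^3)$ remainder of Step 1 is routine by comparison but needs to be stated with uniform constants.
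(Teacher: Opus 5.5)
Your plan is correct. It shares the paper's overall architecture: reduce via \eqref{eq:S-fano} to a covariance, control the tail by a split-coupling domination, then solve the stationary balance equations perturbatively. The middle step, however, is organized differently.

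\textbf{How the paper does it.} The paper dominates only the total $Z=X_1+X_2$ by a one-dimensional batch-$2$ immigration--death process. That controls the mass outside $S_4$, but gives no useful pointwise information inside $S_4$; for instance it only yields $\pi_\varepsilon(1,1)=O(\varepsilon)$. The paper therefore solves the full fifteen-state truncated system at orders $\varepsilon^0,\varepsilon^1,\varepsilon^2$, justified by invertibility of $A_0$ and analytic perturbation theory.

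\textbf{How your route differs.} You dominate coordinatewise by two independent batch queues. This is valid because $\varepsilon a,\varepsilon b\le\tfrac95\varepsilon$, and common bursts with a shared size preserve the componentwise order. It gives the sharper bound $\pi_\varepsilon(x)=O(\varepsilon^{\lceil x_1/2\rceil+\lceil x_2/2\rceil})$, with uniform tail control since each $Y_i\le 2N_i$ for a Poisson $N_i$ of mean $O(\varepsilon)$. With that bound, each needed leading coefficient comes from a single balance equation, solved top-down:
\begin{itemize}
\item $(2,0)$ then $(1,0)$, and symmetrically $(0,2)$, $(0,1)$, at order $\varepsilon$;
\item then $(2,2)$; then $(2,1)$ and $(1,2)$; then $(1,1)$, at order $\varepsilon^2$.
\end{itemize}

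\textbf{What each buys.} Your route avoids the perturbation-theory step entirely and uses eight states instead of fifteen. The paper's route is more mechanical and produces the full second-order expansion on $S_4$, which you do not need.

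\textbf{Two remarks.} First, you do not need the $\varepsilon^2$ corrections to the axis states at all. $\E_{\pi_\varepsilon}[\tilde a(X_2)X_1]$ is supported on $\{x_1,x_2\ge1\}$, and $\E_{\pi_\varepsilon}[\tilde a(X_2)]\,\E_{\pi_\varepsilon}[X_1]$ is a product of two $O(\varepsilon)$ factors. So only the order-$\varepsilon$ values $\pi_\varepsilon(1,0),\pi_\varepsilon(0,1)\sim\varepsilon$ and $\pi_\varepsilon(2,0),\pi_\varepsilon(0,2)\sim r\varepsilon/2$ enter, and the cancellation you anticipate is automatic.

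Second, the computation does close to the stated constant. The joint block gives
\begin{itemize}
\item $\pi_\varepsilon(2,2)\sim r^2\varepsilon^2/4$,
\item $\pi_\varepsilon(2,1)\sim(4r^2+13r)\varepsilon^2/30$,
\item $\pi_\varepsilon(1,2)\sim(17r-4r^2)\varepsilon^2/30$,
\item $\pi_\varepsilon(1,1)\sim\varepsilon^2$.
\end{itemize}
Hence
\[
\E_{\pi_\varepsilon}[\tilde a(X_2)X_1]=\frac{26r^2+47r+15}{75}\varepsilon^2+O(\varepsilon^3),
\qquad
\E_{\pi_\varepsilon}[\tilde a(X_2)]\,\E_{\pi_\varepsilon}[X_1]=\frac{(1+2r)(1+r)}{5}\varepsilon^2+O(\varepsilon^3).
\]
Their difference is $-\tfrac{2r(2r-1)}{75}\varepsilon^2$. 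Adding $\E_{\pi_\varepsilon}[X_1]$ to the first display recovers \eqref{eq:S-counter-EX1a}.

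For component $2$, the same values give
\[
\E_{\pi_\varepsilon}[\tilde b(X_1)X_2]=-\frac{34r^2+43r+15}{75}\varepsilon^2+O(\varepsilon^3),
\]
which again yields $-\tfrac{2r(2r-1)}{75}\varepsilon^2$ for the covariance.
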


\begin{proof}
The proof has two elementary steps.  First, by a stochastic domination argument we show that, in stationarity,
the states outside
\[
        S_4 :=  \left\{(x,y)\in\mathbb Z_{\ge0}^2:x+y\le4 \right\},
\]
contribute only $O(\varepsilon^3)$ to
the probabilities and moments needed below.  This follows by coupling the
total copy number (the sum of the components) to a one-dimensional immigration--death process with a fixed immigration size 2.  Second,
we restrict the stationary balance equations to $S_4$.  They form a finite
linear system whose coefficients depend linearly on $\varepsilon$.
The coefficient matrix at $\varepsilon=0$ is invertible, and hence the
stationary probabilities on $S_4$ admit an expansion through second order
\cite{kato2012short}.  The coefficients are then obtained by solving three
finite linear systems (those for orders $\varepsilon^0$, $\varepsilon^1$, $\varepsilon^2$).

  Let
\[
        Z:=X_1+X_2.
\]
The total production-event rate satisfies
\[
        \lambda_Z  :=  \varepsilon\bigl(a(X_2)+b(X_1)\bigr)  \le    \bar\lambda_\varepsilon := \frac{14}{5}\varepsilon,
\]
while the total degradation rate is exactly $Z$. Note that 
every production event increases $Z$ by either one or two.

Using the same split-coupling construction as in
Proposition~\ref{prop:S-burst-semigroup-monotonicity} (i.e., shared transitions with minimum rate), we couple $Z$ with
the one-dimensional process $Y_\varepsilon$ having transitions
\[
        y\longrightarrow y+2  \quad\text{at constant rate }\bar\lambda_\varepsilon,  \qquad  y\longrightarrow y-1
        \quad\text{at rate }y.
\]
Suppose that initially $Z\le Y_\varepsilon$.  At rate $\lambda_Z$, a
common production event occurs: the original chain undergoes its actual
production event, so $Z$ increases by either one or two, while
$Y_\varepsilon$ increases by two.  At the remaining rate
$\bar\lambda_\varepsilon-\lambda_Z$, only $Y_\varepsilon$ increases.
For degradation, common deaths occur at rate $Z$ (since it is the minimum), decreasing both totals
by one, while the remaining deaths, at rate $Y_\varepsilon-Z$, occur only
in $Y_\varepsilon$.  Each possible coupled transition preserves the
inequality $Z\le Y_\varepsilon$.

Starting both processes from zero therefore gives
\[
        Z(t)\le Y_\varepsilon(t)
        \qquad\text{for all }t\ge0
\]
almost surely.  Since both processes converge to their unique stationary laws, the stationary distribution of $X_1+X_2$ is stochastically dominated by the stationary
distribution of $Y_\varepsilon$.  That is, for every nondecreasing function
$\phi$ for which the expectations are finite,
\[
        \E_{\pi_\varepsilon}\!\left[\phi(X_1+X_2)\right]
        \le
        \E\!\left[\phi(Y_\varepsilon)\right],
\]
where $Y_\varepsilon$ on the right is distributed according to its stationary
law.  See, for example, \cite{shaked2007stochastic}.

We turn to characterizing the stationary distribution of $Y_\varepsilon$.  Let $G_\varepsilon$ denote the  probability generating function for the stationary distribution
of $Y_\varepsilon$.  For $0<z<1$, its stationary generator identity gives
\[
        0   =   \bar\lambda_\varepsilon(z^2-1)G_\varepsilon(z)+   (1-z)G_\varepsilon'(z).
\]
Together with $G_\varepsilon(1)=1$, this yields
\begin{equation}
        G_\varepsilon(z)
        =
        \exp\left\{
        \frac{7}{5}\varepsilon(z^2-1)
        +
        \frac{14}{5}\varepsilon(z-1)
        \right\}.
\label{eq:S-dominating-pgf}
\end{equation}
Hence,
\[
        Y_\varepsilon
        \overset{d}{=}
        2A_\varepsilon+C_\varepsilon,
\]
where
\[
        A_\varepsilon
        \sim
        \operatorname{Poisson}\left(\frac75\varepsilon\right),
        \qquad
        C_\varepsilon
        \sim
        \operatorname{Poisson}\left(\frac{14}{5}\varepsilon\right)
\]
are independent.

Let
\[
        N_\varepsilon:=A_\varepsilon+C_\varepsilon.
\]
Then
\[
        N_\varepsilon
        \sim
        \operatorname{Poisson}\left(\frac{21}{5}\varepsilon\right),
        \qquad
        Y_\varepsilon\le2N_\varepsilon.
\]
The event $Y_\varepsilon\ge5$ requires at least three Poisson arrivals,
and hence
\[
        \{Y_\varepsilon\ge5\}
        \subseteq
        \{N_\varepsilon\ge3\}.
\]
Therefore, for $q\in\{0,1,2\}$,
\begin{align*}
\E\left[
        Y_\varepsilon^q
        \mathbf 1_{\{Y_\varepsilon\ge5\}}
\right]
&\le
2^q
\E\left[
        N_\varepsilon^q
        \mathbf 1_{\{N_\varepsilon\ge3\}}
\right]
\\
&=
O(\varepsilon^3).
\end{align*}
Applying stochastic domination with the nondecreasing function
$\phi(z)=z^q\mathbf 1_{\{z\ge5\}}$ gives
\begin{equation}
        \E_{\pi_\varepsilon}
        \left[
        Z^q\mathbf 1_{\{Z\ge5\}}
        \right]
        =
        O(\varepsilon^3),
        \qquad
        q\in\{0,1,2\}.
\label{eq:S-counter-S4-tail}
\end{equation}
Thus the states outside $S_4$ contribute only $O(\varepsilon^3)$ to all
probabilities and moments used below.

Now write
\[
        \pi_\varepsilon(x,y)
        :=
        \mathbb P_{\pi_\varepsilon}(X_1=x,X_2=y),
        \qquad
        x,y\in\mathbb Z_{\ge0}.
\]
The stationary balance equation at $(x,y)$ is
\begin{align}
\begin{split}
\left[
x+y+\varepsilon\bigl(a(y)+b(x)\bigr)
\right]
\pi_\varepsilon(x,y)
&=
(x+1)\pi_\varepsilon(x+1,y)
+
(y+1)\pi_\varepsilon(x,y+1)\\
&\quad+
\varepsilon a(y)
\left[
(1-r)\pi_\varepsilon(x-1,y)
+
r\pi_\varepsilon(x-2,y)
\right]\\
&\quad+
\varepsilon b(x)
\left[
(1-r)\pi_\varepsilon(x,y-1)
+
r\pi_\varepsilon(x,y-2)
\right],
\end{split}
\label{eq:S-counter-stationary-balance}
\end{align}
where probabilities with a negative coordinate are interpreted as zero.

We now restrict the stationary calculation to $S_4$. Let
\[
        \mathbf p_\varepsilon
        :=
        \bigl(
        \pi_\varepsilon(x,y)
        \bigr)_{(x,y)\in S_4}.
\]
This vector has fifteen components, and so we set up 15 new equations.  For each of the fourteen states
$(x,y)\in S_4\setminus\{(0,0)\}$, we use the stationary balance equation
\eqref{eq:S-counter-stationary-balance} evaluated at that state.  As the
fifteenth equation, we use normalization:
\[
        \sum_{(x,y)\in S_4}\pi_\varepsilon(x,y)
        =
        1-\mathbb P_{\pi_\varepsilon}(Z\ge5).
\]
If $1\le x+y\le3$, every probability appearing in the corresponding
balance equation belongs to a state in $S_4$.  If $x+y=4$, the incoming
degradation terms come from states with total count five and are therefore
placed in the remainder vector.

We index both the components of $\mathbf p_\varepsilon$ and the rows of the
system by the states in $S_4$.  For $(x,y)\ne(0,0)$, the $(x,y)$ row is
the stationary balance equation at $(x,y)$; the $(0,0)$ row is the
normalization equation.  Collecting these equations gives
\begin{equation}
        (A_0+\varepsilon A_1)\mathbf p_\varepsilon
        =
        \mathbf d+\boldsymbol\rho_\varepsilon,
\label{eq:S-counter-finite-system}
\end{equation}
where
\[
        \mathbf d_{x,y}
        =
        \mathbf 1_{\{(x,y)=(0,0)\}}.
\]
The vector $\boldsymbol\rho_\varepsilon$ consists precisely of
the missing stationary mass in the normalization equation and the incoming
degradation flux from states with total count five.  Hence
\[
        \|\boldsymbol\rho_\varepsilon\|
        =
        O(\varepsilon^3)
\]
by \eqref{eq:S-counter-S4-tail}.

The matrix $A_0$ is invertible.  Indeed, at $\varepsilon=0$ the
non-normalization rows are the stationary balance equations for the
pure-death chain.  Solving these equations successively from level
$x+y=4$ down to level $x+y=1$ shows that every solution is supported at
$(0,0)$; the homogeneous normalization equation then forces that remaining
entry to vanish.  Thus $\ker A_0=\{0\}$.
Standard finite-dimensional analytic perturbation
theory therefore gives, uniformly over $(x,y)\in S_4$,
\begin{equation}
        \pi_\varepsilon(x,y)
        =
        c_0(x,y)
        +
        \varepsilon c_1(x,y)
        +
        \varepsilon^2c_2(x,y)
        +
        O(\varepsilon^3);
\label{eq:S-counter-pi-expansion}
\end{equation}
see, for example, Ref.~\cite[Chap.~II]{kato2012short}.

Substituting this expansion into
\eqref{eq:S-counter-finite-system} and using
$\boldsymbol\rho_\varepsilon=O(\varepsilon^3)$ gives
\[
        A_0\mathbf c_0=\mathbf d,
        \qquad
        A_0\mathbf c_1=-A_1\mathbf c_0,
        \qquad
        A_0\mathbf c_2=-A_1\mathbf c_1,
\]
where
\[
        \mathbf c_\ell
        :=
        \bigl(c_\ell(x,y)\bigr)_{(x,y)\in S_4}.
\]
Since $A_0$ is invertible, these three finite systems uniquely determine
the coefficients through second order.  

Solving these finite systems and substituting the resulting coefficients into the stationary expectations gives
\begin{align}
\E_{\pi_\varepsilon}[a(X_2)]
&=
1+\frac{1+2r}{5}\varepsilon
+O(\varepsilon^2),
\label{eq:S-counter-Ea}
\\
\E_{\pi_\varepsilon}[X_1a(X_2)]
&=
(1+r)\varepsilon
+
\frac{56r^2+92r+30}{75}\varepsilon^2
+
O(\varepsilon^3),
\label{eq:S-counter-EX1a}
\\
\E_{\pi_\varepsilon}[b(X_1)]
&=
1-\frac{1+2r}{5}\varepsilon
+O(\varepsilon^2),
\label{eq:S-counter-Eb}
\\
\E_{\pi_\varepsilon}[X_2b(X_1)]
&=
(1+r)\varepsilon
-
\frac{64r^2+88r+30}{75}\varepsilon^2
+
O(\varepsilon^3).
\label{eq:S-counter-EX2b}
\end{align}

The exact stationary first-moment balances are
\[
        \E_{\pi_\varepsilon}[X_1]
        =
        (1+r)\varepsilon
        \E_{\pi_\varepsilon}[a(X_2)],
        \qquad
        \E_{\pi_\varepsilon}[X_2]
        =
        (1+r)\varepsilon
        \E_{\pi_\varepsilon}[b(X_1)].
\]
Consequently,
\begin{align}
\E_{\pi_\varepsilon}[X_1]
&=
(1+r)\varepsilon
+
\frac{(1+r)(1+2r)}{5}\varepsilon^2
+
O(\varepsilon^3),
\label{eq:S-counter-EX1}
\\
\E_{\pi_\varepsilon}[X_2]
&=
(1+r)\varepsilon
-
\frac{(1+r)(1+2r)}{5}\varepsilon^2
+
O(\varepsilon^3).
\label{eq:S-counter-EX2}
\end{align}

It follows that
\begin{align}
\Cov_{\pi_\varepsilon}(a(X_2),X_1)
&=
\E_{\pi_\varepsilon}[X_1a(X_2)]
-
\E_{\pi_\varepsilon}[X_1]
\E_{\pi_\varepsilon}[a(X_2)]
\notag\\
&=
-\frac{2r(2r-1)}{75}\varepsilon^2
+
O(\varepsilon^3),
\label{eq:S-counter-cov1}
\end{align}
and similarly,
\begin{equation}
        \Cov_{\pi_\varepsilon}(b(X_1),X_2)
        =
        -\frac{2r(2r-1)}{75}\varepsilon^2
        +
        O(\varepsilon^3).
\label{eq:S-counter-cov2}
\end{equation}

Finally,
\[
        f_1^{(\varepsilon)}(X_2)
        =
        \varepsilon a(X_2),
        \qquad
        \mu_1
        =
        \varepsilon
        \E_{\pi_\varepsilon}[a(X_2)].
\]
Therefore, the general Fano identity \eqref{eq:S-fano} gives
\begin{align*}
F_{X_1}-B(r)
&=
\frac{
\Cov_{\pi_\varepsilon}
\bigl(
f_1^{(\varepsilon)}(X_2),X_1
\bigr)
}{\mu_1}
\\
&=
\frac{
\Cov_{\pi_\varepsilon}(a(X_2),X_1)
}{
\E_{\pi_\varepsilon}[a(X_2)]
}.
\end{align*}
Using \eqref{eq:S-counter-Ea} and
\eqref{eq:S-counter-cov1}, we obtain
\[
        F_{X_1}-B(r)
        =
        -\frac{2r(2r-1)}{75}\varepsilon^2
        +
        O(\varepsilon^3).
\]
The same argument, using \eqref{eq:S-counter-Eb} and
\eqref{eq:S-counter-cov2}, gives
\[
        F_{X_2}-B(r)
        =
        -\frac{2r(2r-1)}{75}\varepsilon^2
        +
        O(\varepsilon^3).
\]
This proves \eqref{eq:S-counter-fano-expansion}.

For $r>1/2$, the leading coefficient is strictly negative.  Hence both
Fano factors are strictly below $B(r)$ for all sufficiently small positive
$\varepsilon$.
\end{proof}

\begin{remark}
The preceding example is not finely tuned.  More generally, replace
\eqref{eq:S-counter-a-b} by
\[
a(0)=b(0)=1,
\qquad
a(1)=1+c,
\qquad
b(1)=1-c,
\]
and
\[
a(n)=1+d,
\qquad
b(n)=1-d,
\qquad n\ge2.
\]
The same finite calculation gives, for $j=1,2$,
\begin{equation}
F_{X_j}-B(r)
=
\frac{
r(2c-d)(2c+dr-d)
}{6}
\varepsilon^2
+
O(\varepsilon^3).
\label{eq:S-general-counter-family}
\end{equation}
Thus, whenever
\[
        0<c<\frac12,
        \qquad
        2c<d<1,
        \qquad
        r>1-\frac{2c}{d},
\]
the leading coefficient in
\eqref{eq:S-general-counter-family} is strictly negative.  These strict
conditions define an open set of rate and burst-law parameters.  The
choice
\[
        c=\frac15,
        \qquad
        d=\frac45
\]
used above makes the burst threshold exactly $r>1/2$.
\end{remark}

\section{Termwise bounds under signed monotonicity}
\label{sec:S-signed-monotonicity}

Recall the signed order \eqref{eq:signed-order-main} and the
signed-monotonicity condition \eqref{eq:signed-monotonicity-main}.
A probability distribution $\rho$ on $S$ is said to be associated with
respect to $\preceq_\sigma$ if
\[
        \Cov_\rho(u,v)\ge0
\]
for every pair of bounded $\preceq_\sigma$--increasing functions
$u,v:S\to\mathbb R$.

The proof of Theorem~\ref{thm:signed-main} follows the four-step argument
of \cite[Section~5.3]{anderson2026noise}.  Most of that argument applies
without change.  We give details for the two points that require
modification for arbitrary burst sizes: common production events in the
order-preserving coupling must use the same burst-size realization, and
the finite-state approximation must cap bursts at the boundary rather
than suppress them.

Let $(P_t)_{t\ge0}$ denote the Markov semigroup,
\[
        (P_tu)(x):=\E_x[u(X(t))].
\]

\begin{proposition}[Monotonicity of the burst semigroup]
\label{prop:S-burst-semigroup-monotonicity}
Suppose Assumption~\ref{ass:S-standing} holds and the network satisfies
\eqref{eq:signed-monotonicity-main}.  Then $P_t$ is monotone with respect
to $\preceq_\sigma$ for every $t\ge0$: if $u$ is bounded and
$\preceq_\sigma$--increasing, then $P_tu$ is also
$\preceq_\sigma$--increasing.
\end{proposition}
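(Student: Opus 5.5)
The plan is to build an order-preserving Markovian coupling of two copies of the chain started from ordered states $x\preceq_\sigma y$. From it we read off $u(X^x(t))\le u(X^y(t))$ almost surely for every $\preceq_\sigma$-increasing $u$, and hence $(P_tu)(x)\le (P_tu)(y)$. As in the dominating-process argument of Section~\ref{sec:S-counterexamples}, we use the split (basic) coupling: each reaction channel fires jointly at the minimum of its two rates, and the excess rate fires in one copy only. Assumption~\ref{ass:S-standing} gives nonexplosion of each marginal (by \cite[Lemma 3.1]{anderson2026noise}). Hence the coupled chain, whose marginals are the original chain, is nonexplosive and well defined, and it suffices to check that every coupled transition out of an ordered pair $(x,y)$ with $x\preceq_\sigma y$ lands in an ordered pair.

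Fix a coordinate $i$ and suppose $\sigma_i=+1$, so $x_i\le y_i$ and, by \eqref{eq:signed-monotonicity-main}, $f_i(x_{-i})\le f_i(y_{-i})$. The production channel for $i$ fires jointly at rate $f_i(x_{-i})$. The essential choice is that both copies then receive \emph{the same} burst size $k\sim K_i$, so $x_i+k\le y_i+k$. The remaining production rate $f_i(y_{-i})-f_i(x_{-i})$ fires in the $y$-copy only, which increases $y_i$ and preserves the order. Degradation of species $i$ fires jointly at rate $x_i/\tau_i$, decreasing both coordinates by one. The excess rate $(y_i-x_i)/\tau_i$ decreases only $y_i$, and this is possible only when $y_i>x_i$, so $y_i-1\ge x_i$. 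The case $\sigma_i=-1$ is symmetric: now $y_i\le x_i$ and $f_i(y_{-i})\le f_i(x_{-i})$, the excess births occur in the $x$-copy, and the excess deaths occur in the $x$-copy only when $x_i>y_i$. Transitions in coordinate $i$ leave the other coordinates unchanged, so the order persists for all times.

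The step needing the most care is the shared burst realization. If bursts in the two copies were sampled independently, a joint event could produce $x_i+k>y_i+k'$ and break the order. Matching the bursts also keeps each marginal correct, since each copy still jumps by $k e_i$ at total rate $f_i$ with $k\sim K_i$. A secondary technical point is passing from the almost-sure pathwise inequality to the semigroup statement. For bounded $u$ this follows directly from dominated convergence, using only nonexplosion. If nonexplosion of the coupled process were in doubt, the fallback would be a finite-state truncation in which bursts are capped at the boundary rather than suppressed, since capping preserves the order. One would then let the truncation level tend to infinity.
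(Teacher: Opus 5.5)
Your proposal is correct and matches the paper's proof step for step: a split coupling in which each channel fires jointly at the minimum of its two rates with a single shared burst-size realization, unmatched bursts and residual unit deaths confined to the copy where they preserve $\preceq_\sigma$, and nonexplosion of the coupled chain inherited from its nonexplosive marginals. Two minor remarks: passing from the pathwise inequality to $(P_tu)(x)\le(P_tu)(y)$ needs only monotonicity of expectation for bounded $u$, not dominated convergence; and the capped truncation you offer as a fallback is the device the paper uses later, in the association step of Theorem~\ref{thm:signed-main}, rather than in this proposition.
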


\begin{proof}
Fix $x\preceq_\sigma y$.  We construct two copies
$(X^x,X^y)$, started from $x$ and $y$, that preserve this order.

Suppose the current states are $z\preceq_\sigma w$.  For production in
coordinate $i$, set
\[
\begin{aligned}
b_i^0(z,w)
&:=
\min\{f_i(z_{-i}),f_i(w_{-i})\},\\
b_i^z(z,w)
&:=
f_i(z_{-i})-b_i^0(z,w),\\
b_i^w(z,w)
&:=
f_i(w_{-i})-b_i^0(z,w).
\end{aligned}
\]
At rate $b_i^0(z,w)$, a single random variable with the law of $K_i$ is
sampled and the same burst size is added to coordinate $i$ of both
processes.  At rates $b_i^z(z,w)$ and $b_i^w(z,w)$, a burst with the law
of $K_i$ is added only to the corresponding process.

If $\sigma_i=1$, then signed monotonicity gives
\[
        f_i(z_{-i})\le f_i(w_{-i}),
\]
so $b_i^z(z,w)=0$: an unmatched burst can occur only in $X^y$, where it
preserves $z_i\le w_i$.  If $\sigma_i=-1$, then
\[
        f_i(z_{-i})\ge f_i(w_{-i}),
\]
so $b_i^w(z,w)=0$: an unmatched burst can occur only in $X^x$, where it
preserves $z_i\ge w_i$.  A common burst preserves the order because the
same burst size is added to both coordinates.

Death events are coupled by the same split construction, with common
rate
\[
        \min\left\{\frac{z_i}{\tau_i},\frac{w_i}{\tau_i}\right\}.
\]
Any residual death occurs only in the process with the larger
$i$th coordinate.  Since deaths have unit size and the coordinates are
integer valued, such an event cannot reverse the corresponding signed
inequality.  Thus every possible coupled jump preserves
$\preceq_\sigma$.

The marginals have the correct transition rates, and the coupled process
is nonexplosive because its marginals are nonexplosive.  Hence
\[
        X^x(t)\preceq_\sigma X^y(t)
        \qquad\text{for every }t\ge0
\]
almost surely.  Therefore, for every bounded
$\preceq_\sigma$--increasing function $u$,
\[
        (P_tu)(x)
        \le
        (P_tu)(y),
\]
which proves the proposition.
\end{proof}

\begin{proof}[Proof of Theorem~\ref{thm:signed-main}]
It remains to obtain association of the stationary law and apply it to
the relevant observables.  We first introduce the finite-state
approximation needed for the association argument.

For $M\ge1$, let
\[
        S_M:=\{0,1,\dots,M\}^N.
\]
For $z\in S_M$, define
\[
        \Theta_{i,k}^{(M)}(z)
        :=
        z+
        \bigl((z_i+k)\wedge M-z_i\bigr)e_i.
\]
Thus $\Theta_{i,k}^{(M)}(z)$ is obtained by adding a burst of size $k$ in
coordinate $i$ and then capping that coordinate at $M$.  Define
\begin{align}
(L^{(M)}u)(z)
&:=   \sum_{i=1}^N
f_i(z_{-i})
\sum_{k\ge1}p_{i,k}
\bigl[
u(\Theta_{i,k}^{(M)}(z))-u(z)
\bigr] +
\sum_{i=1}^N
\frac{z_i}{\tau_i}
\bigl[
u(z-e_i)-u(z)
\bigr],
\label{eq:S-capped-generator}
\end{align}
where the death term is omitted when $z_i=0$, and let
$(P_t^{(M)})_{t\ge0}$ be the corresponding semigroup.

The split coupling from
Proposition~\ref{prop:S-burst-semigroup-monotonicity} also applies to
this capped process.  Indeed, if $\sigma_i=1$ and $z_i\le w_i$, then
\[
        (z_i+k)\wedge M
        \le
        (w_i+k)\wedge M,
\]
whereas if $\sigma_i=-1$ and $z_i\ge w_i$, then
\[
        (z_i+k)\wedge M
        \ge
        (w_i+k)\wedge M.
\]
Thus common capped bursts preserve the signed order, unmatched bursts
occur only in the order-preserving process, and the death argument is
unchanged.  Consequently, $P_t^{(M)}$ is monotone with respect to
$\preceq_\sigma$.

Having established monotonicity of the capped semigroup, the remainder of
the association argument is unchanged from
\cite[Sections~5.3.2--5.3.4]{anderson2026noise}.  With
$L^{(M)}$ and $P_t^{(M)}$ denoting the capped generator and semigroup,
respectively, define $\Gamma_M$, $a_s$, $b_s$, $F_s=a_sb_s$, and
\[
        H(s):=P_s^{(M)}F_s(x)
\]
exactly as in that reference.  Since every nontrivial capped transition
changes only one coordinate, its initial and terminal states are comparable
under $\preceq_\sigma$.  Consequently,
$\Gamma_M(a_s,b_s)\ge0$, and the same semigroup-interpolation calculation
shows that the transition laws of the capped process are associated.

The passage to the full process is also unchanged.  Couple the capped and
full processes using the same event times and burst-size realizations.  They
agree until the first time the full process exits $S_M$.  By nonexplosion and
the fact that every burst size is finite almost surely, these exit times tend
to infinity as $M\to\infty$.  Hence the transition laws of the full process
are associated.  The stationary and truncation arguments of
\cite[Sections~5.3.3--5.3.4]{anderson2026noise} then imply
\[
        \Cov_\pi\bigl(
        \sigma_i f_i(X_{-i}),\sigma_iX_i
        \bigr)
        =
        \Cov_\pi\bigl(f_i(X_{-i}),X_i\bigr)
        \ge0.
\]
Finally, \eqref{eq:S-fano} and $\mu_i>0$ give
\[
        F_{X_i}-B_i
        =
        \frac{\Cov_\pi(f_i(X_{-i}),X_i)}{\mu_i}
        \ge0,
\]
which proves \eqref{eq:signed-termwise-main}.
\end{proof}

The structural-balance criterion stated in the main text is purely
graph-theoretic and is unchanged from
\cite[Section~5.1]{anderson2026noise}.

\end{document}